\newtheorem{theorem}{Theorem}[section]
\newtheorem*{theorem*}{Theorem}
\newtheorem{corollary}[theorem]{Corollary}
\newtheorem{lemma}[theorem]{Lemma}
\newtheorem*{lemma*}{Lemma}
\newtheorem*{proposition*}{Proposition}
\theoremstyle{definition} 
\newtheorem{definition}[theorem]{Definition}
\newtheorem*{definition*}{Definition}
\theoremstyle{remark}
\newtheorem{remark}[theorem]{Remark}
\newcommand{\bi}{\begin{itemize}}
\newcommand{\iii}{\item}
\newcommand{\ei}{\end{itemize}}
\newcommand{\bd}{\begin{description}}
\newcommand{\ed}{\end{description}}
\newcommand{\bdeff}{\begin{definition}}
\newcommand{\edeff}{\end{definition}}
\newcommand{\bqn}{\begin{eqnarray}}
\newcommand{\eqn}{\end{eqnarray}}
\newcommand{\R}{\mathbb{R}}						
\newcommand{\bbN}{{\mathbb{N}}}
\newcommand{\bbR}{{\mathbb{R}}}
\newcommand{\tr}{{\mathrm {tr}}}
\newcommand{\rk}{{\mathrm {rk}}}
\newcommand{\calF}{{\mathcal{F}}}
\newcommand{\calG}{{\mathcal{G}}}
\newcommand{\calI}{{\mathcal{I}}}
\newcommand{\calN}{{\mathcal{N}}}
\newcommand{\calQ}{{\mathcal{Q}}}
\newcommand{\calX}{{\mathcal{X}}}
\newcommand{\calY}{{\mathcal{Y}}}
\author{Davide Barilari$^\flat$}
\address{$^\flat$Institut de Math\'ematiques de Jussieu-Paris Rive Gauche UMR CNRS 7586, Universit\'e Paris-Diderot,
Batiment Sophie Germain, Case 7012, 75205 Paris Cedex 13, France} \email{davide.barilari@imj-prg.fr}
\author{Elisa Paoli$^\sharp$}
\address{$^\sharp$SISSA, Via Bonomea 265, Trieste, Italy}
\email{epaoli@sissa.it}
\date{\today}
\title[Curvature in heat kernel expansion for  H\"{o}rmander operators]{Curvature terms in small time heat kernel expansion for a model class of hypoelliptic H\"{o}rmander operators}
\begin{document}
	
\begin{abstract}
We consider the heat equation associated with a class of second order hypoelliptic H\"{o}rmander operators with constant second order term and linear drift. We describe the possible small time heat kernel expansion on the diagonal giving a geometric characterization of the coefficients in terms of the divergence of the drift field and the curvature-like invariants of the optimal control problem associated with the diffusion operator. 
\end{abstract}

\maketitle

\tableofcontents

\newcommand{\N}{\mathbb{N}}
\newcommand{\cb}[1]{\textcolor{blue}{#1}}
\section{Introduction}
The interaction between analysis and geometry in the study of the heat equation in several smooth (and more recently, non-smooth) contexts has attracted an increasing attention during the last decades. 

On a Riemannian manifold, many results have been obtained relating the small time asymptotics of the fundamental solution of the heat equation, the so-called \emph{heat kernel}, to geometric quantities such as distance \cite{varadhan}, cut and conjugate locus \cite{molchanov,neelstroock}, curvature invariants \cite{spectrebook}. The extension of these results to non-Riemannian situations (from the geometric viewpoint) or non-elliptic operators (from the viewpoint of PDE), when possible, is non trivial: some results have been obtained relating the hypoelliptic heat kernel with its associated Carnot-Carath\'eodory distance \cite{leandremaj,leandremin} and its cut locus \cite{BBCN13,barbosneel}, but much less is known concerning the relation with curvature. This is also related to the difficulty of defining a general notion of curvature associated to Carnot-Carath\'eodory metrics.  

One of the most celebrated results in the Riemannian setting, which is by now classical and in the spirit of the investigation presented in this paper, reads as follows. Denote by $p(t,x,y)$  the heat kernel associated with the Laplace-Beltrami operator $\Delta_{g}$  on a Riemannian manifold $(M,g)$, then the coefficients appearing in the small time heat kernel expansion on the diagonal 
\begin{equation}\label{eq1}
p(t,x_{0},x_{0})=\frac{1}{t^{n/2}}\left(\sum_{i=0}^{m}a_{i}(x_{0}) t^{i}+O(t^{m})\right)
\end{equation}
 contain information about the curvature of the manifold at the point $x_{0}$, i.e., all $a_{i}(x_{0})$
can be written as universal polynomials of the Riemann tensor and its covariant derivatives computed at the point $x_{0}$ (see for instance \cite{spectrebook,book:RosenbergLaplacian}). 

More generally, if one is interested in the study of the heat kernel associated with a smooth second order elliptic operator $L$, possibly with drift, one can extract the good Riemannian metric $g$ from the principal symbol of the operator $L$, such that the on-the-diagonal heat kernel expansion contains geometric information about curvature associated to the metric $g$ and the local structure of the drift at the point. For instance, in \cite{bismut}, it is proved that for the operator $L=\Delta_{g}+X_{0}$ associated to a Riemannian manifold $(M,g)$ the corresponding heat kernel satisfies for $t\to 0$
\begin{equation}\label{eq:2}
p(t,x_{0},x_{0})=\frac{1}{(4\pi t)^{n/2}}\left(1-\left(\frac{\mathrm{div}(X_{0})}{2}+\frac{\|X_{0}(x_{0})\|^{2}}{2}-\frac{S(x_{0})}{6}\right)t+O(t^{2})\right),
\end{equation}
where $S$ is the scalar curvature of the Riemannian metric $g$.

Let us now consider the second order H\"{o}rmander-type operator 
\begin{equation}\label{eq:L}
L=\frac12 \sum_{i=1}^{k}X_{i}+X_{0}
\end{equation}
satisfying the \emph{H\"ormander hypoellipticity condition} 
\begin{equation} \label{eq:lie} \tag{wHC}
\mathrm{Lie}\{(\mathrm{ad} X_{0})^{j} X_{i}\,|\, i=1,\ldots,k, \, j\in \N\}\big|_{x}=T_{x}M,\qquad \forall\, x\in M.
\end{equation}
(Here $(\mathrm{ad} X)Y=[X,Y]$ and $\mathrm{Lie}\,\mathcal F$ denotes the smallest Lie algebra containing a family of vector fields $\mathcal{F}$).

Condition \eqref{eq:lie} guarantees the existence of a smooth heat kernel $p(t,x,y)$ for the heat equation associated with $L$ defined in \eqref{eq:L}. Condition \eqref{eq:lie} is also sometimes referred as \emph{weak H\"ormander condition}, in contrast to its \emph{strong} version \eqref{eq:lies} where only the vector field appearing in the second order term of $L$ are required to generate the tangent space 
 \begin{equation} \label{eq:lies} \tag{sHC}
\mathrm{Lie}\{ X_{i}\,|\, i=1,\ldots,k \}\big|_{x}=T_{x}M,\qquad \forall\, x\in M.
\end{equation}

As soon as one removes the ellipticity assumptions on $L$, the geometric content of the coefficients in the asymptotic expansion of the heat kernel (and indeed even the structure of this asymptotics) is much less understood.

For instance, when the operator $L$ is elliptic, the decay on the diagonal of $p(t,x,x)$ for $t\to 0$ is always of polynomial type, for every drift $X_{0}$ (cf.\ \eqref{eq:2}). On the other hand, when $L$ is only hypoelliptic, the drift plays a crucial role in the structure of the asymptotics.

Due to hypoellipticity, the diffusion generated by the second order term acts in all directions but in an anisotropic way. If the drift field is a smooth section of $\mathcal{G}_2(x):=\mathrm{span}_{x}\{X_{i},[X_{j},X_{k}]: i,j,k\geq 1\}$, then it modifies the coefficients of the diffusion, but only at higher terms and does not affect the order of the asymptotics. On the other hand, if the drift field points outside $\mathcal{G}_2$, it has a stronger effect on the diffusion and its flow could not be compensated by the fields $X_1,\ldots,X_k$, generating a rapid decrease of the heat at the point (see also \cite{article:BAL1}). This different behavior, given by second order Lie brackets, can be heuristically explained by the fact that the second order part of the operator reproduces a Brownian motion, which moves as $\sqrt{t}$, while the drift field has velocity $t$, then if $X_0$ points outside $\mathcal{G}_2$ the diffusion generated by the second order part is too slow.

More precisely, when the drift vector field is either zero or horizontal, i.e., $X_{0}=\sum f_{i}X_{i}$ for some smooth functions $f_{i}$, conditions \eqref{eq:lie} and \eqref{eq:lies} coincide and Ben Arous proved in \cite{benarousdiag} that the decay on the diagonal is polynomial\footnote{Here and in what follows $f(t)\sim g(t)$ means  $f(t)=g(t)(C+o(1))$ for some $C\neq 0$ and $t\to 0$.}
$$p(t,x_{0},x_{0})\sim t^{-Q/2}$$
where $Q$ is the Hausdorff dimension associated to the Carnot-Carath\'eodory metric defined by $X_{1},\ldots,X_{k}$. More generally, in presence of a non-zero drift and under the weak H\"ormander condition assumption \eqref{eq:lie}, in \cite{paoli} it is proved that either
\begin{equation} \label{eq:paoli0}
p(t,x_{0},x_{0})\sim t^{-\mathcal{N}/2}
\end{equation}
or $p(t,x_0,x_0)$ has a faster decay, even exponential for every $x_0$ that is an equilibrium point for the drift. Here the difference in the behaviour is characterized by the controllability of an associated control problem. Moreover $\mathcal{N}$ is an integer that depends on the structure of the Lie algebra \eqref{eq:lie}, that coincides with $Q$ when $X_{0}$ is horizontal. 


When $X_{0}(x_{0})\neq 0$ does not belong to $\mathcal{G}_2(x_{0})$, then the flow of the drift locally interacts with the heat diffusion given by the second order part and generates an exponential decay, see \cite{article:BAL1,article:BAL2}.


Concerning the geometric meaning of the coefficients of the asymptotic expansion, few results are available and only when the drift field is either zero or horizontal. In \cite{Article:Barilari} it is computed the first term of the asymptotic for 3D contact structures, where an invariant $\kappa$ of the sub-Riemannian structure playing the role of the curvature appears. Concerning higher dimensional structures,  to our best knowledge, the only known results are \cite{stanton84} for the case of a Sasakian manifold (where the trace of the Tanaka Webster curvature appears) and the case of the two higher dimensional model spaces: CR spheres \cite{baudoinwang13} and quaternionic Hopf fibrations \cite{baudoinwang14}. 
 

\medskip
In this paper we perform the first step in the understanding the structure of the asymptotic expansion at the diagonal in the case of an operator with general drift. In particular we are interested in the geometric characterization of its coefficients, for the model class of H\"ormander operators in $\R^{n}$ where $X_{0}$ is a linear field and $X_{i}$ are constant and linearly independent:
$$X_{0}=\sum_{j,h=1}^{n}a_{jh}x_{h}\partial_{x_{j}},\qquad X_{i}=\sum_{j=1}^{n}b_{ij}\partial_{x_{j}},\qquad i=1,\ldots,k.$$
If we denote by $A=(a_{jh})$ and $B=(b_{ij})$ respectively the $n\times n$ and $n\times k$ matrices of the coefficients the operator $L$ assumes the form
 \begin{equation}\label{eq:pdelin0}
L=\sum_{j=1}^n(Ax)_j\frac{\partial}{\partial x_j} + \frac12\sum_{j, h=1}^n (BB^*)_{jh}\frac{\partial^2}{\partial x_j\partial x_h}.
\end{equation}
The (weak) H\"{o}rmander condition for the operator \eqref{eq:pdelin0} is equivalent to the assumption that there exists a minimal $m\in \N$ such that
\begin{equation}\label{eq:kalman0}
\mathrm{rk}[B,AB,A^2B,\ldots,A^{m-1}B]=n.
\end{equation}
The heat equation associated to $L$ admits a smooth fundamental solution $p(t,x,y)\in C^\infty (\bbR^+\times\bbR^n\times\bbR^n)$ that can be computed explicitly as follows
\begin{equation}\label{eq:kernel}
p(t,x,y)=\frac{e^{-\frac{1}{2}(y-e^{tA}x)^*D_t^{-1}(y-e^{tA}x)}}{(2\pi)^{n/2}\sqrt{\det{D_t}}} 
\end{equation}
where 
\begin{equation}
D_t=e^{tA}\left(\int_{0}^t e^{-\tau A}BB^* e^{-\tau A^*}d\tau\right) e^{tA^*}.
\label{eq:Dt3}
\end{equation}
By condition \eqref{eq:kalman0}, the matrix $D_t$ is invertible for every $t>0$. 

These operators are the simplest class of hypoelliptic, but not elliptic, operators satisfying \eqref{eq:lie} and are classical in the literature, starting from the pioneering work of H\"{o}rmander \cite{article:Hormander} (see also \cite{LanconelliPolidoro94} for a detailed discussion on this class of operators). Their fundamental solution is the probability density of the stochastic process $\xi_t$ solution of
$$d\xi_t=A\xi_tdt+B dw(t),$$
where $w(t)$ is a $k$-dimensional Brownian motion. For the reader's convenience we present in Appendix the construction of the fundamental solution and its relation to the associated control problem described below.

Indeed, as already pointed out by Stroock and Varadhan \cite{art:stroockvaradhan} in the study of the support of the diffusion, 
the properties of $\xi_t$ are strongly related with the solutions to the control problem
\begin{equation}\label{eq:le}
\dot{x}(t)=Ax(t)+ Bu(t),\qquad x\in \R^{n}, u\in \R^{k}.
\end{equation}
The relation between the solution of the heat equation and the associated control problem is classical, see for instance \cite{bismut}.
In this paper we further investigate this relation and we show how geometric invariants associated to the \emph{cost functional}
\begin{equation}
J_T(u)=\frac{1}{2}\int_0^T |u(s)|^2 ds
\label{eq:cost}
\end{equation}
reveals some geometric-like properties of the heat kernel. In other words, for every fixed $x_{1},x_{2} \in \bbR^n$ and $T>0$, one is interested in computing 
\begin{equation*}
S_T(x_{1},x_{2}):=\inf\{J_T(u): u\in L^\infty([0,T];\bbR^k), x_u(0)=x_{1}, x_u(T)=x_{2}\}.
\end{equation*}
where $x_{u}(\cdot)$ is the solution of \eqref{eq:le} associated with the control $u$.
The condition \eqref{eq:kalman0} (also known as \emph{Kalman condition}) ensures that the control system \eqref{eq:le} is controllable, i.e., $S_T(x_{1},x_{2})<+\infty$ for all $x_{1},x_{2}\in \R^{n}$ and $T>0$. 

For $x_0\in \bbR^n$ fixed, let $x_{\bar{u}}(t)$ be an optimal trajectory starting at $x_0$, i.e., a minimizer of the cost functional. The \emph{geodesic cost} associated with $x_{\bar{u}}$ is the family of functions
\begin{equation}
c_t(x):=-S_t(x,x_{\bar{u}}(t)) \qquad \mbox{ for } t>0,x\in\bbR^n.
\label{eq:geodesic cost}
\end{equation}
From the asymptotics of $c_{t}$, one can highlight some ``curvature-like'' invariants of the cost, which define a family of symmetric operators
\bqn
\mathcal{I}:\R^{k}\to \R^{k},\qquad \mathcal{Q}^{(i)}:\R^{k}\to \R^{k},\qquad i\geq 0.
\eqn
These operators, that are in principle associated with an optimal trajectory, in the case of a linear-quadratic optimal control problem are constant. 
 
The  operator $\mathcal{I}$ is connected to the flag generated by the brackets along the optimal trajectory. The operators $\mathcal{Q}^{(i)}$ play the role of curvature invariants for the optimal control problem (see Section \ref{s:invarianti} and \cite{article:AgrachevBarilariRizzi} for more details). 

We recall here the geometric meaning of the trace of $\mathcal{I}$ that is crucial in what follows. Define the filtration $E_1\subset E_2\subset\ldots \subset E_m=\bbR^n$ as
\begin{equation}\label{eq:E}
E_i=\mathrm{span}\{A^jBx\,|\, x\in \R^{k}, \; 0\leq j\leq i-1\}.
\end{equation}
Then
\begin{equation}\label{eq:NN}
\mathcal{N}:=\mathrm{tr}(\mathcal{I})
= \sum_{i=0}^{m-1} (2i-1)(\dim E_{i+1}-\dim E_{i}). 
\end{equation}

\subsection{Main results}
When $x_{0}$ is an equilibrium of the drift field, we can compute and characterize all the coefficients in the small time asymptotic expansion, providing a characterization of the coefficients that is analogous to the one obtained on a Riemannian manifold. 

\begin{theorem}\label{th:Ker} Assume that $Ax_{0}=0$. Then
\begin{equation}\label{eq33}
p(t,x_{0},x_{0})=\frac{ t^{-\mathcal{N}/2}}{(2\pi)^{n/2}\sqrt{c_0}}\left(\sum_{i=0}^{m}a_{i} t^{i}+O(t^{m})\right), \qquad \mbox{ for } t\to 0,
\end{equation}
where $\mathcal{N}=\mathrm{tr}(\mathcal{I})$ is defined in \eqref{eq:NN} and $c_0$ is a positive constant.
Moreover there exists universal polynomials $P_{i}$ of degree $i$ such that 
$$a_{i}=P_{i}(\tr A, \tr \calQ^{(0)},\ldots, \tr \calQ^{(i-2)}).$$
In particular for $i=1,2,3,$ we have
\begin{gather}
a_{1}=-\frac{\tr A}{2},\qquad 
a_{2}=\frac{(\tr A)^{2}}{8}+\frac{\tr \calQ^{(0)}}{4},\\[0,2cm]
a_{3}=-\frac{\tr \calQ^{(1)}}{12}-\frac{\tr A \,\tr \calQ^{(0)}}{8} -\frac{(\tr A)^{3}}{48} .
\end{gather}
\end{theorem}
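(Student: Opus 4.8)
The plan is to start from the explicit formula \eqref{eq:kernel} for the heat kernel, which at the equilibrium point $x_0$ (where $e^{tA}x_0 = x_0$) simplifies to
\begin{equation}
p(t,x_0,x_0) = \frac{1}{(2\pi)^{n/2}\sqrt{\det D_t}}.
\end{equation}
So the whole problem reduces to understanding the small-time asymptotics of $\det D_t$, where $D_t = e^{tA}\bigl(\int_0^t e^{-\tau A}BB^* e^{-\tau A^*}\,d\tau\bigr)e^{tA^*}$. Since $\det e^{tA} = e^{t\,\tr A}$, we have $\det D_t = e^{2t\,\tr A}\det C_t$ with $C_t = \int_0^t e^{-\tau A}BB^* e^{-\tau A^*}\,d\tau$, so the first step is to factor out $e^{t\,\tr A}$ cleanly; this already explains the appearance of $\tr A$ (and its powers) in the $a_i$, and reduces everything to the asymptotics of $\det C_t$.

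The second step is to analyze $\det C_t$ as $t\to 0$. The matrix $C_t$ degenerates as $t\to 0$ because of the anisotropy coming from the Kalman condition: directions in $E_{i+1}\setminus E_i$ (with $E_i$ from \eqref{eq:E}) are reached only via $i$ brackets and carry a factor $t^{2i-1}$ in the quadratic form. I would make this precise by choosing an adapted basis of $\R^n$ compatible with the flag $E_1\subset\cdots\subset E_m$ and performing an anisotropic rescaling $\delta_t$ (dilations weighting the $i$-th layer by $t^{i-1/2}$), so that $\delta_t^{-1} C_t \delta_t^{-1} \to C_\infty$ for an invertible limit matrix $C_\infty > 0$. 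The total weight extracted is exactly $\sum_{i}(2i-1)(\dim E_{i+1}-\dim E_i) = \mathcal{N}$ by \eqref{eq:NN}, giving $\det C_t \sim c_0\, t^{\mathcal{N}}\bigl(1 + O(t)\bigr)$ with $c_0 = \det C_\infty > 0$; together with the factor $e^{-t\,\tr A}$ from the square root this yields the prefactor $t^{-\mathcal{N}/2}(2\pi)^{-n/2}c_0^{-1/2}$ in \eqref{eq33} and establishes that the bracketed series is a genuine power series in $t$ (the rescaled matrix $\delta_t^{-1}C_t\delta_t^{-1}$ has entries that are smooth, in fact polynomial, in $t$, so its determinant is too).

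The third and main step is to identify the Taylor coefficients of $\log\bigl(t^{-\mathcal{N}}\det C_t / c_0\bigr)$ with the traces of the curvature operators $\calQ^{(i)}$. Here I would invoke the theory of Section \ref{s:invarianti} (and \cite{article:AgrachevBarilariRizzi}): the geodesic cost $c_t$ of \eqref{eq:geodesic cost} for this linear-quadratic problem is, up to the known Gaussian normalization, governed by the same matrix $C_t$, and the operators $\mathcal{I}$, $\calQ^{(i)}$ are precisely the coefficients in the asymptotic expansion of (the relevant Hessian of) $c_t$. Concretely, one writes $\frac{d}{dt}\log\det C_t = \tr(C_t^{-1}\dot C_t)$, expands $C_t^{-1}\dot C_t$ in the adapted basis using the rescaling, and matches term by term; the leading singular part reproduces $\mathcal{N}/t$, and the successive regular terms reproduce $\tr\calQ^{(0)}, \tr\calQ^{(1)},\ldots$ with the combinatorial coefficients $\tfrac14, -\tfrac1{12},\ldots$ that one reads off by comparing with the structure equations defining $\calQ^{(i)}$. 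Exponentiating and multiplying by $e^{-t\,\tr A}$ then produces the universal polynomials $P_i$ and, by a direct low-order computation, the explicit values of $a_1,a_2,a_3$.

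\medskip
The main obstacle is the third step: one must have a clean bridge between the analytic object $\det C_t$ and the invariants $\calQ^{(i)}$ defined via the optimal control problem. This requires either re-deriving the $\calQ^{(i)}$ directly from $C_t$ in the linear-quadratic case (plausible, since everything is constant-coefficient and the Jacobi-type equation is linear with constant coefficients) or carefully citing the normal forms of \cite{article:AgrachevBarilariRizzi} and checking that the normalization conventions match. The anisotropic rescaling in the second step is technically delicate but standard; the first step is routine.
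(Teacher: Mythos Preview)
Your overall architecture matches the paper's proof: reduce to $(\det D_t)^{-1/2}$, extract the leading order $t^{\mathcal N}$ via an adapted basis and anisotropic rescaling, and identify the higher Taylor coefficients of $\log\det D_t$ with the traces of the $\calQ^{(i)}$ from Theorem~\ref{th:A}. What differs is the execution of your third step, and there is one step you have not noticed at all.

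\medskip
\textbf{On the ``bridge'' you flag as the main obstacle.} You propose to expand $\tr(C_t^{-1}\dot C_t)$ with $C_t=\Gamma_t$ and ``match term by term'' with the $\calQ^{(i)}$, possibly re-deriving them from normal forms. This is the wrong object: $\dot\Gamma_t=e^{-tA}BB^*e^{-tA^*}$, so $\tr(\Gamma_t^{-1}\dot\Gamma_t)$ is not literally $\tr(B^*\Gamma_t^{-1}B)$, which is the primitive of $\tr\calQ(t)$. The paper's bridge is purely algebraic and much simpler than what you fear. Work with $D_t$ rather than $\Gamma_t$: from $D_t=e^{tA}\Gamma_te^{tA^*}$ one gets the Lyapunov identity $\dot D_t=AD_t+D_tA^*+BB^*$, hence
\[
\frac{d}{dt}\log\det D_t=\tr(D_t^{-1}\dot D_t)=2\,\tr A+\tr(B^*D_t^{-1}B).
\]
A change of variable in the integral shows $D_t=-\Gamma_{-t}$, so $\tr(B^*D_t^{-1}B)=-\tr(B^*\Gamma_{-t}^{-1}B)$. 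Now Theorem~\ref{th:A} gives the Laurent expansion of $\tr(B^*\Gamma_t^{-1}B)$ directly (it is minus the antiderivative of $\tr\calQ(t)$), and substituting $t\mapsto -t$ yields the exponential representation
\[
\det D_t=c_0\,t^{\mathcal N}\exp\Bigl((c+2\tr A)t+\sum_{i\geq 0}(-1)^{i+1}\tr\calQ^{(i)}\tfrac{t^{i+2}}{(i+1)(i+2)}+\cdots\Bigr).
\]
No normal forms or rederivation of the $\calQ^{(i)}$ are needed; the link is the two-line computation above.

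\medskip
\textbf{The step you are missing.} Integrating the expansion of $\tr\calQ(t)$ introduces an \emph{unknown constant of integration} $c$, which sits in the linear term $(c+2\tr A)t$ above. Your factorization $\det D_t=e^{2t\,\tr A}\det\Gamma_t$ does not by itself determine it, because you still do not know the $O(t)$ term of $\det\Gamma_t$ intrinsically. The paper fixes $c$ by an independent first-order computation: in the adapted basis one writes $\Gamma_t=J_{\sqrt t}(\calX+t\calY+O(t^2))J_{\sqrt t}$ and hence $\det\Gamma_t=t^{\mathcal N}\det\calX\,(1+\tr(\calX^{-1}\calY)t+o(t))$; comparing $\det D_t=e^{2t\,\tr A}\det\Gamma_t$ with $\det D_t=(-1)^n\det\Gamma_{-t}$ and using that $n$ and $\mathcal N$ have the same parity forces $\tr(\calX^{-1}\calY)=-\tr A$, i.e.\ $c=-\tr A$. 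Only after this do the explicit values $a_1=-\tfrac{\tr A}{2}$, etc., follow by expanding the exponential. Without this argument your proposal cannot produce the stated coefficients.
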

We stress that the explicit structure of any higher order coefficient can be a priori computed by a simple Taylor expansion, as it follows from the proof, cf.\ Section \ref{s:proof1}.

More in general, one has an expansion of $p(t,x,y)$, at every pair of points $x,y$, relating the heat kernel with the optimal cost functional and the same geometric coefficients of Theorem \ref{th:Ker}.
\begin{corollary}\label{th:cost}
For any pair of points, $x,y\in\bbR^n$,
$$p(t,x,y)=\frac{ t^{-\mathcal{N}/2}}{(2\pi)^{n/2}\sqrt{c_0} }e^{-S_t(x,y)}\left(\sum_{i=0}^{m}a_{i} t^{i}+O(t^{m})\right), \qquad \mbox{ for } t\to 0,$$
where the coefficients $a_{i}$ are characterized as in Theorem \ref{th:Ker}.
\end{corollary}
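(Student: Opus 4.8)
The plan is to reduce the statement to Theorem \ref{th:Ker} by combining the explicit formula \eqref{eq:kernel}--\eqref{eq:Dt3} for $p(t,x,y)$ with the classical closed form for the value function of the linear--quadratic problem \eqref{eq:le}--\eqref{eq:cost}. First I would rewrite $D_t$: moving $e^{tA}$ and $e^{tA^*}$ inside the integral in \eqref{eq:Dt3} and substituting $s=t-\tau$ identifies $D_t$ with the controllability Gramian
\begin{equation*}
D_t=\int_0^t e^{sA}BB^*e^{sA^*}\,ds,
\end{equation*}
which is positive definite for every $t>0$ by \eqref{eq:kalman0}. Next I would solve for $S_t(x,y)$: writing the endpoint constraint for \eqref{eq:le} as $\int_0^t e^{(t-s)A}Bu(s)\,ds=y-e^{tA}x$ and minimizing $J_t(u)$ over this affine subspace (equivalently, via the Pontryagin Maximum Principle, which here reduces to a linear two-point boundary value problem), one finds that the minimizer is $\bar u(s)=B^*e^{(t-s)A^*}D_t^{-1}(y-e^{tA}x)$, which is analytic, hence admissible; substituting back gives
\begin{equation*}
S_t(x,y)=\tfrac12\,(y-e^{tA}x)^*D_t^{-1}(y-e^{tA}x).
\end{equation*}

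Comparing this with the exponent in \eqref{eq:kernel} yields the exact identity $p(t,x,y)=e^{-S_t(x,y)}\bigl((2\pi)^{n/2}\sqrt{\det D_t}\bigr)^{-1}$ for all $t>0$ and all $x,y\in\bbR^n$. The crucial observation is then that the prefactor $\bigl((2\pi)^{n/2}\sqrt{\det D_t}\bigr)^{-1}$ is independent of $x$ and $y$, and in fact equals $p(t,0,0)$, since $0$ is an equilibrium of $x\mapsto Ax$ and $e^{tA}\cdot 0=0$ makes the exponential factor trivial. Therefore Theorem \ref{th:Ker}, applied at $x_0=0$, furnishes the small time expansion
\begin{equation*}
\frac{1}{(2\pi)^{n/2}\sqrt{\det D_t}}=\frac{t^{-\mathcal{N}/2}}{(2\pi)^{n/2}\sqrt{c_0}}\Bigl(\sum_{i=0}^{m}a_i t^i+O(t^m)\Bigr),\qquad t\to 0,
\end{equation*}
with $\mathcal{N}=\mathrm{tr}(\mathcal{I})$ and the universal coefficients $a_i$ exactly as in Theorem \ref{th:Ker}. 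Multiplying this expansion by the (smooth, positive) factor $e^{-S_t(x,y)}$ gives the claimed asymptotics.

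The step I expect to require the most care is the second one: justifying the closed form of $S_t$ and the fact that the infimum in the definition of $S_t(x,y)$ is attained by an $L^\infty$ control. This is standard linear--quadratic control theory, and the construction recalled in the Appendix already makes the link between \eqref{eq:kernel} and \eqref{eq:le} explicit, so no genuinely new argument is needed; all the analytic content — the exponent $\mathcal{N}$ of the polynomial decay and the geometric identification of the $a_i$ in terms of $\mathrm{tr}\,A$ and $\mathrm{tr}\,\calQ^{(i)}$ — is already carried by Theorem \ref{th:Ker}.
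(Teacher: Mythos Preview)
Your proposal is correct and follows essentially the same route as the paper: identify the exponent in \eqref{eq:kernel} with the optimal cost $S_t(x,y)$, then use the expansion of $\bigl((2\pi)^{n/2}\sqrt{\det D_t}\bigr)^{-1}$ obtained in the proof of Theorem~\ref{th:Ker}. The only cosmetic difference is that the paper reads off the formula for $S_t$ from the Hamiltonian system (Remark~\ref{rk:cost function}) rather than rederiving it by direct minimization, and refers to the expansion \eqref{eq:sqrt} directly rather than phrasing it as ``apply Theorem~\ref{th:Ker} at $x_0=0$''.
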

This corollary is a direct consequence of the previous theorem and is proved in Section \ref{s:proof1b}.

\medskip
Next we consider the case when $x_{0}$ is not a zero of the drift field. In this case, as explained, one can observe different behaviors depending on the smallest level of the filtration \eqref{eq:E} to which the vector $Ax_{0}$ belongs. Indeed the optimal cost to remain fixed in the point $x_0$ is strictly positive and the asymptotics depends on the exponential in Theorem \ref{th:cost}.
\begin{theorem}\label{th:nonKer} Assume that $Ax_{0}\neq 0$. Then
\bi
\iii[(i)] If $Ax_{0}\in E_{1}$, then we have the
 polynomial decay 
$$p(t,x_{0},x_{0})=\frac{ t^{-\mathcal{N}/2}}{(2\pi)^{n/2}\sqrt{c_0}}\left[1-\left(\frac{\tr A}{2}+\frac{|Ax_{0}|^{2}}{2}\right)t+O(t^2)\right], \quad \mbox{ for } t\to 0.$$
\iii[(ii)] If $Ax_0\in E_i\setminus E_{i-1}$ for some $i>1$, then $p(t,x_{0},x_{0})$ has exponential decay to zero. More precisely there exists $C>0$ such that
$$p(t,x_0,x_0)= \frac{ t^{-\mathcal{N}/2}}{(2\pi)^{n/2}\sqrt{c_0}}\exp \left(-\frac{C+O(t)}{t^{2i-3}}\right), \qquad \mbox{ for } t\to 0.$$
\ei
\end{theorem}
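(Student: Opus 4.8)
The plan is to reduce the statement to the small-time asymptotics of the optimal cost $S_t(x_0,x_0)$ of returning to $x_0$, which we extract from the explicit formulas \eqref{eq:kernel}--\eqref{eq:Dt3}. Set $C_t:=\int_0^t e^{-\tau A}BB^*e^{-\tau A^*}\,d\tau$, so that $D_t=e^{tA}C_te^{tA^*}$ and $D_t^{-1}=e^{-tA^*}C_t^{-1}e^{-tA}$, and set $w_t:=(e^{-tA}-I)x_0=\sum_{l\ge1}\tfrac{(-t)^l}{l!}A^lx_0$; both $w_t$ and $C_t$ are entire in $t$, vanish at $t=0$, and $C_t=tBB^*+O(t^2)$. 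Since $e^{-tA}(x_0-e^{tA}x_0)=w_t$, the value function of the linear--quadratic problem is
\[
S_t(x_0,x_0)=\tfrac12\,(x_0-e^{tA}x_0)^*D_t^{-1}(x_0-e^{tA}x_0)=\tfrac12\,w_t^*C_t^{-1}w_t ,
\]
and, by Corollary~\ref{th:cost} (equivalently: the prefactor $(2\pi)^{-n/2}(\det D_t)^{-1/2}$ does not depend on the base point and its expansion is the content of Theorem~\ref{th:Ker}),
\[
p(t,x_0,x_0)=\frac{t^{-\mathcal{N}/2}}{(2\pi)^{n/2}\sqrt{c_0}}\,e^{-S_t(x_0,x_0)}\Big(\sum_{l=0}^m a_lt^l+O(t^m)\Big),\qquad a_0=1 .
\]
So it remains to expand $S_t(x_0,x_0)$ in the two regimes, substitute, and use $a_1=-\tfrac{\tr A}{2}$.

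\emph{Part (i): $Ax_0\in E_1=\mathrm{Im}\,B$.} I would estimate $S_t(x_0,x_0)$ from both sides. For the upper bound, choose $u_0$ with $Bu_0=-Ax_0$; using $(e^{sA}-I)x_0=\big(\int_0^se^{\tau A}d\tau\big)Ax_0$, the constant control $u\equiv u_0$ produces the trajectory $x(s)=e^{sA}x_0+\big(\int_0^se^{\tau A}d\tau\big)Bu_0=x_0$, i.e.\ an admissible loop of cost $\tfrac12 t|u_0|^2$; minimizing over $u_0$ gives $S_t(x_0,x_0)\le\tfrac12\,|Ax_0|^2\,t$, where $|Ax_0|^2:=\min\{|u|^2:Bu=-Ax_0\}=(Ax_0)^*(BB^*)^+(Ax_0)$ is the natural squared norm attached to the principal symbol of $L$ (finite precisely because $Ax_0\in\mathrm{Im}\,B$). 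For the lower bound I would use $w_t^*C_t^{-1}w_t\ge (\lambda^*w_t)^2/(\lambda^*C_t\lambda)$ with $\lambda$ a maximizer of $\lambda\mapsto(\lambda^*Ax_0)^2/(\lambda^*BB^*\lambda)$; since $w_t=-tAx_0+O(t^2)$ and $C_t=tBB^*+O(t^2)$, this yields $S_t(x_0,x_0)\ge\tfrac12\,(Ax_0)^*(BB^*)^+(Ax_0)\,t\,(1+O(t))$. Combining the two bounds, $S_t(x_0,x_0)=\tfrac12\,|Ax_0|^2\,t+O(t^2)$, so that $e^{-S_t(x_0,x_0)}=1-\tfrac12|Ax_0|^2\,t+O(t^2)$; together with $a_0=1$ and $a_1=-\tfrac{\tr A}{2}$ this gives exactly the stated formula of part (i).

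\emph{Part (ii): $Ax_0\in E_i\setminus E_{i-1}$, $i\ge2$.} Here I would use the anisotropic structure of $C_t$ that underlies the order $t^{-\mathcal{N}/2}$ in Theorem~\ref{th:Ker}. Fix complements $V_j$ of $E_{j-1}$ in $E_j$, so $\R^n=V_1\oplus\cdots\oplus V_m$, and let $\delta_\lambda$ act on $V_j$ by $\lambda^{(2j-1)/2}$. Since $AE_j\subseteq E_{j+1}$, the $(V_j,V_k)$-block of $C_t$ is $O(t^{j+k-1})$, hence $\delta_{1/t}C_t\delta_{1/t}^*=\hat C+O(t)$ as $t\to0$ with $\hat C=\hat C^*>0$ and $\det\hat C=c_0$. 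For the same reason $A^lx_0\in E_{i+l-1}$, so the $V_j$-component of $w_t$ is $O(t^{\max(1,\,j-i+1)})$ while its $V_i$-component equals $-t\,v_i+O(t^2)$, where $v_i\neq0$ is the $V_i$-component of $Ax_0$ (nonzero because $Ax_0\notin E_{i-1}$). Therefore $\zeta_t:=t^{-(3-2i)/2}\,\delta_{1/t}w_t=\zeta+O(t)$, with $\zeta$ supported in $V_i\oplus\cdots\oplus V_m$ and $V_i$-component $-v_i$, so $\zeta\neq0$. Using $C_t^{-1}=\delta_{1/t}^*(\delta_{1/t}C_t\delta_{1/t}^*)^{-1}\delta_{1/t}$,
\[
S_t(x_0,x_0)=\tfrac12\,t^{3-2i}\,\zeta_t^*\big(\delta_{1/t}C_t\delta_{1/t}^*\big)^{-1}\zeta_t=\frac{C+O(t)}{t^{2i-3}},\qquad C:=\tfrac12\,\zeta^*\hat C^{-1}\zeta ,
\]
and $C>0$ since $\hat C>0$ and $\zeta\neq0$. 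As $2i-3\ge1$, $e^{-S_t(x_0,x_0)}$ decays faster than any power of $t$, so it absorbs the prefactor $a_0+a_1t+\cdots=1+O(t)$ (with the $O(t)$ passing into the exponent), and one obtains $p(t,x_0,x_0)=\dfrac{t^{-\mathcal{N}/2}}{(2\pi)^{n/2}\sqrt{c_0}}\exp\!\big(-\tfrac{C+O(t)}{t^{2i-3}}\big)$, which is part (ii).

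\emph{Main difficulty.} The only substantial analytic input is the rescaling limit $\delta_{1/t}C_t\delta_{1/t}^*\to\hat C$ with $\hat C$ positive definite, $\det\hat C=c_0$ and $O(t)$ remainder: this is exactly the matricial fact producing the exponent $\mathcal{N}$ in Theorem~\ref{th:Ker}, hence available. Everything else is Cauchy--Schwarz plus bookkeeping of orders of vanishing via $AE_j\subseteq E_{j+1}$; the one point requiring slight care is pinning down the constant $\tfrac12|Ax_0|^2$ in part (i), which the exact loop produced by the constant control settles once sandwiched against the variational lower bound.
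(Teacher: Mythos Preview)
Your proof is correct. Part (ii) is essentially the paper's argument in different notation: your dilation $\delta_{1/t}$ is the paper's $J_{1/\sqrt t}$, your $\hat C$ is the paper's $\calX$, and your $w_t$ equals $-\int_0^t e^{-sA}ds\,\alpha$; both of you track the orders of vanishing of the components along the filtration via $AE_j\subseteq E_{j+1}$ and read off the pole of order $2i-3$ from the rescaled quadratic form.

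Part (i) is where you genuinely diverge. The paper pins down the constant $\tfrac12|Ax_0|^2$ by a direct linear-algebra computation: it normalizes $B=\begin{pmatrix}\mathbb{I}_k\\0\end{pmatrix}$, writes the principal part $\calX$ as a $2\times2$ block matrix $\begin{pmatrix}\mathbb{I}_k & C^*\\ C & E\end{pmatrix}$, proves a dedicated lemma giving the block form of $\calX^{-1}$, and then checks that the four blocks combine to leave exactly $y^*\mathbb{I}_k y$. Your route is more conceptual and shorter: the constant control $u\equiv u_0$ with $Bu_0=-Ax_0$ produces the \emph{exact} stationary loop $x(s)\equiv x_0$, giving the sharp upper bound $S_t(x_0,x_0)\le\tfrac12|u_0|^2\,t$ for all $t$, while a single Cauchy--Schwarz inequality $w_t^*C_t^{-1}w_t\ge(\lambda^*w_t)^2/(\lambda^*C_t\lambda)$ with a fixed maximizing $\lambda$ yields the matching lower bound to $O(t^2)$. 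This sidesteps the block-inverse algebra and makes the control-theoretic meaning of the constant transparent. The trade-off is that the paper's computation delivers the full first-order quadratic form (not just its value along $Ax_0$), which would be the natural starting point for extracting higher-order terms of $\varphi(t,0,0)$; your sandwich argument gives the leading coefficient cleanly but would need to be refined for that purpose.
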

We stress that claim (i) is the analogue of the expansion \eqref{eq:2}: here no scalar curvature appears since all brackets of horizontal fields are zero. For the same reason, in this case the space $\mathcal{G}_{2}$ coincides with $E_{1}$ and in claim (ii), when $Ax_{0}\notin E_{1}$, we indeed observe an exponential decay.
The proof of Theorem \ref{th:nonKer} is presented in Section \ref{s:proof2}.


%

\section{Linear quadratic optimal control problems}
Let us consider the optimal control problem associated with the operator $L$
\begin{equation}
\left\{
\begin{array}{l}
\dot{x}=Ax+ Bu\\
J_T(u)=\frac{1}{2}\int_0^T \sum_{i=1}^k|u_i(s)|^2 ds\to \min
\end{array}
\right.
\label{eq:ocp}
\end{equation}
Here $u\in L^\infty([0,T];\bbR^k)$ is the control and $J_T$ is the optimal cost to be minimized. A curve $x(t)$ in $\bbR^n$ is called \emph{admissible} for the control problem \eqref{eq:ocp} if there exists a control function $u\in L^\infty([0,T];\bbR^k)$ such that $\dot{x}(t)=Ax(t)+ Bu(t)$ for a.e. $t\in[0,T]$.

The solution of the differential equation \eqref{eq:ocp} corresponding to the control $u$ will be denoted by $x_u:[0,T]\rightarrow\bbR^n$ and for a fixed initial point $x_1\in \bbR^n$ is given by:
\begin{equation}
x_u(t)=e^{tA}x_1+e^{tA}\int_0^t e^{-\tau A}Bu(\tau)d\tau.
\label{eq:Cauchy}
\end{equation}
Among all trajectories $x_u$ starting at $x_1$ and arriving in a point $x_2\in\bbR^n$ in time $T$ we want to minimize the cost functional $J_T$:
for every fixed $x_1,x_2\in \bbR^n$ and $T>0$, we define the \emph{value function}
\begin{equation}
S_T(x_1,x_2):=\inf\{J_T(u)| u\in L^\infty([0,T];\bbR^k), x_u(0)=x_1, x_u(T)=x_2\}.
\label{eq:optimal cost2}
\end{equation}
A control $\bar{u}$ that realizes the minimum in \eqref{eq:optimal cost2} is called an \emph{optimal control}, and the corresponding trajectory $x_{\bar{u}}:[0,T]\rightarrow\bbR^n$ is called an \emph{optimal trajectory} of the control problem \eqref{eq:ocp}.

It is well-known (see for example \cite{book:Agrachev}) that the optimal trajectories of the control problem \eqref{eq:ocp} can be obtained as the projection of the solutions of an Hamiltonian system in $T^*\bbR^n$. Namely, let
$$H(p,x)=p^*Ax+\frac{1}{2} p^*BB^*p, \qquad \forall\, (p,x)\in T^*\bbR^n,$$
be the Hamiltonian function associated with the optimal control problem. All the optimal trajectories are the projection $x(t)$ of the solution $(p(t),x(t))\in T^*\bbR^n\cong \bbR^{2n}$ of the Hamiltonian system associated with $H$
\begin{equation}
\begin{cases}
\dot{p}=-A^*p\\
\dot{x}=Ax+BB^*p.
\end{cases}
\label{eq:hsystem}
\end{equation}
Moreover, the control realizing the optimal trajectory is uniquely recovered by $\bar u(t)=B^*p(t)$. Thus the solution corresponding to the initial condition $(p_0,x_0)\in T^*_{x_0}\bbR^n$ can be found explicitly
\begin{equation} 
\begin{cases}
p(t)=e^{-t A^*}p_0\\
x(t)=e^{tA}\left(x_0+\int_0^te^{-\tau A}BB^* e^{-\tau A^*}d\tau p_0\right).
\end{cases}
\label{eq:extremals}
\end{equation}
Let us denote by $\Gamma_t$ the matrix
\begin{equation}
\Gamma_t:=\int_0^te^{-\tau A}BB^* e^{-\tau A^*}d\tau.
\label{eq:Gamma}
\end{equation}
By Kalman's condition \eqref{eq:kalman0}, it follows that $\Gamma_t$ is invertible for every $t>0$.

\begin{remark}\label{rk:cost function}
Fix $x_1,x_2\in\bbR^n$ and $T>0$. By the explicit formulas \eqref{eq:extremals} there exists a unique initial covector $p_0$ such that the corresponding extremal $x(t)$ satisfies $x(0)=x_1$ and $x(T)=x_2$. It is equal to
$$p_0=\Gamma_T^{-1}\left(e^{-TA}x_2-x_1\right).$$
Since the optimal control is given by $\bar u(t)=B^*p(t)$, we can also write the optimal cost to go from $x_1$ to $x_2$, namely
$$S_T(x_1,x_2)=\frac{1}{2}p_0^*\Gamma_T p_0.$$
It follows that the cost function is smooth in $(T,x_1,x_2)\in\bbR^+\times\bbR^n\times\bbR^n$, where $\bbR^+=\{T\in \R:T>0\}$.
\end{remark}

\section{The flag and growth vector of an admissible curve}
Let $x_u:[0,T]\rightarrow\bbR^n$ be an admissible curve such that $x_u(0)=x_0$, associated with the control $u$. Let $P_{0,t}$ be the flow defined by $u$, i.e., for every $y\in\bbR^n$
$$P_{0,t}(y):=x_u(t;y)\quad \mbox{ s.t. }\quad x_u(0;y)=y.$$

At any point of $\bbR^n$ we split the tangent space $T_x\bbR^n\cong\bbR^n=D\oplus D^{\perp}$, where $D$ is the $k$-dimensional susbspace generated by the columns of $B$ and $D^{\perp}$ is its orthogonal complement, and we define the following family of subspaces of $T_{x_0}\bbR^n$:
\begin{equation}
\calF_{x_u}(t):=(P_{0,t})_{*}^{-1}D \subset T_{x_0}\bbR^n .
\label{eq:F}
\end{equation} 
In other words, the family $\calF_{x_u}(t)$ is obtained by translating in $T_{x_0}\bbR^n$ the subspace $D$ along the trajectory $x_u$ using the flow $P_{0,t}$.
\begin{definition}
The \emph{flag of the admissible curve} $x_u(t)$ is the sequence of subspaces
\begin{equation}
\calF_{x_u}^i(t):=\mathrm{span}\left\{\left.\frac{d^j}{dt^j}v(t)\right| v(t)\in \calF_{x_u}(t) \mbox{ smooth, } j\leq i-1\right\}\subset T_{x_0}\bbR^n,\quad i\geq 1.
\label{eq:flag}
\end{equation}
\end{definition}
By construction, this is a filtration of $T_{x_0}\bbR^n$, i.e., $\calF_{x_u}^i(t)\subset \calF_{x_u}^{i+1}(t)$, for all $i\geq 1$.

\begin{definition}
Denote by $k_i(t):=\dim \calF_{x_u}^i(t)$. The \emph{growth vector of the admissible curve $x_u(t)$} is the sequence of integers
$$\calG_{x_u}(t)=\left\{k_1(t),k_2(t),\ldots\right\}.$$
\end{definition}
\begin{remark}\label{rk:shift}
For any $s\in[0,T]$ we can define the family of subspace $\calF_{x_{u,s}}(t)$ associated with the admissible curve $x_u$ starting at time $s$. Namely, let $P_{s,t}$ be the flow defined by $u$ starting at time $s<t$, i.e., for every $y\in\bbR^n$
$$P_{s,t}(y):=x_u(t;y)\qquad \mbox{ s.t. } x_u(s;y)=y,$$
and let $x_{u,s}(t):=x_u(s+t)$ be the shifted curve by time $s$. Then we introduce the family of subspaces $\calF_{x_{u,s}}(t):=(P_{s,s+t})_*^{-1}\bbR^k$ with base point $x_u(s)$.

The relation $\calF_{x_{u,s}}(t)=(P_{0,s})_*\calF_{x_u}(s+t)$ implies that the growth vector of the original curve at time $t$ can be equivalently computed via the growth vector at time $0$ of the shifted curve $x_{u,t}$, i.e., $k_i(t)=\dim \calF_{x_{u,t}}^i(0)$ and $\calG_{x_u}(t)=\calG_{x_{u,t}}(0)$.
\end{remark}

The growth vector of a curve $x_u$ at time $0$ can be easily computed. Indeed, by the explicit expression \eqref{eq:Cauchy} of the flow of $u$, the flag of the curve $x_u$ starting from a point $x_0$ is
$$\calF^i_{x_u}(0)=\mathrm{span}\{B,AB,\ldots,A^{i-1}B\}.$$
In particular the flag at time $0$ is independent on the control $u$ and the initial point $x_0$. By Remark \ref{rk:shift} the growth vector of any curve $x_u$ is then independent also from the time and is equal to $\calG_{x_u}(t)=\left\{k_1,k_2,\ldots\right\}$, where the indices $k_i$ are
$$k_i:=\dim \mathrm{span}\{B,AB,\ldots,A^{i-1}B\}.$$
By Kalman's condition \eqref{eq:kalman0} there exists a minimal integer $1\leq m\leq n$ such that $k_m=n$. We call such $m$ the \emph{step} of the control problem (independent of the admissible curves). Moreover notice that $k_1=k$.

\begin{lemma}
Let $d_i:=k_i-k_{i-1}$. Then $d_1\geq d_2\geq \ldots\geq d_m$.
\end{lemma}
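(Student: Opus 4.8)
The plan is to restate the claim in terms of the filtration $E_1\subset E_2\subset\cdots\subset E_m=\bbR^n$ of \eqref{eq:E}, adopting the conventions $E_0=\{0\}$ and $k_0=0$ (so that $d_1=k_1=k$). Since $E_{i-1}\subset E_i$, we have $k_i=\dim E_i$ and $d_i=\dim(E_i/E_{i-1})$, so the asserted chain of inequalities $d_1\geq d_2\geq\cdots\geq d_m$ is equivalent to
$$\dim(E_i/E_{i-1})\ \geq\ \dim(E_{i+1}/E_i)\qquad\text{for all }i\geq 1,$$
and I would prove each such inequality by producing a \emph{surjective} linear map $E_i/E_{i-1}\twoheadrightarrow E_{i+1}/E_i$ induced by $A$.

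First I would record two elementary containments coming straight from the definition $E_i=\mathrm{span}\{B,AB,\ldots,A^{i-1}B\}$: namely $AE_{i-1}\subseteq E_i$, and $E_{i+1}=E_i+AE_i$ (both because $AE_i=\mathrm{span}\{AB,\ldots,A^iB\}$ and $B\in E_i$). The first containment means that $A$ restricts to a map $A\colon E_i\to E_{i+1}$; composing with the quotient projection $\pi\colon E_{i+1}\to E_{i+1}/E_i$ gives a linear map $\bar A_i\colon E_i\to E_{i+1}/E_i$, $v\mapsto Av+E_i$. Again by $AE_{i-1}\subseteq E_i$, the subspace $E_{i-1}$ is contained in $\ker\bar A_i$, so $\bar A_i$ factors through a well-defined linear map $\tilde A_i\colon E_i/E_{i-1}\to E_{i+1}/E_i$. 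Finally, the identity $E_{i+1}=E_i+AE_i$ says precisely that the image of $\bar A_i$ (hence of $\tilde A_i$) is all of $E_{i+1}/E_i$, i.e. $\tilde A_i$ is onto. A surjection forces $\dim(E_i/E_{i-1})\geq\dim(E_{i+1}/E_i)$, that is $d_i\geq d_{i+1}$; iterating over $i=1,\ldots,m-1$ gives the lemma.

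I do not expect any real obstacle: the proof is a short piece of linear algebra, and the only things needing care are the bookkeeping of the conventions ($k_0=0$ so that $d_1=k$) and the verification that $\bar A_i$ is well defined and annihilates $E_{i-1}$ — both reduce to the two containments noted above. If one wishes to avoid quotients entirely, the same argument can be run by picking a complement $V_i$ of $E_{i-1}$ in $E_i$ (so $\dim V_i=d_i$) and noting that $\pi(AV_i)$ spans $E_{i+1}/E_i$ (a space of dimension $d_{i+1}$), which again yields $d_i\geq d_{i+1}$; but the quotient-map formulation is the cleanest.
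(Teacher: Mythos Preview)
Your proof is correct and follows essentially the same route as the paper: define the map $E_i\to E_{i+1}/E_i$ induced by $A$, observe it is surjective (since $E_{i+1}=E_i+AE_i$) and kills $E_{i-1}$, and conclude $d_i\geq d_{i+1}$. If anything, your formulation is slightly more careful: the paper asserts $\mathrm{Ker}\,\widehat{A}=E_{i-1}$, which need not hold in general (e.g.\ $k=2$, $B=[e_1\,|\,e_2]$, $Ae_1=e_3$, $Ae_2=0$ gives $e_2\in\mathrm{Ker}\,\widehat A\setminus E_0$), whereas you correctly claim only the containment $E_{i-1}\subseteq\mathrm{Ker}\,\widehat A$, which is all that is needed for the inequality.
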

\begin{proof}
The linear map $\widehat{A}:\calF^i_{x_u}(t)\rightarrow \calF^{i+1}_{x_u}(t)/\calF^i_{x_u}(t)$ defined by $\widehat{A}v:=Av$ for every $v\in \calF^i_{x_u}(t)$ is surjective and $\mathrm{Ker}\widehat{A}=\calF^{i-1}_{x_u}(t)$. Then
$$\dim \calF^i_{x_u}(t)-\dim\calF^{i-1}_{x_u}(t)\geq \dim \calF^{i+1}_{x_u}(t)-\dim \calF^i_{x_u}(t),$$
which concludes the proof.
\end{proof}

To any curve $x_u$ we associate a tableau with $m$ columns of length $d_i$ for $i=1,\ldots,m$. By the previous Lemma, the height of the columns is decreasing from left to right. We call $n_j$ the length of the $j-$th row, for $j=1,\ldots, k$ (for example $n_1=m$, see Figure \ref{table:tableau}).
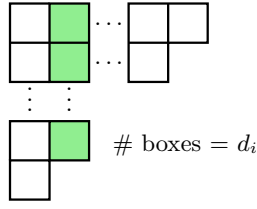
\begin{figure} 
\begin{center}
\begin{tikzpicture}[x=0.26mm, y=0.26mm, inner xsep=0pt, inner ysep=0pt, outer xsep=0pt, outer ysep=0pt]
\path[line width=0mm] (61.05,70.00) rectangle +(127.98,100.00);
\draw(120.00,159.00) node[anchor=base]{\fontsize{9.39}{11.27}\selectfont $\ldots$};
\draw(120.00,139.00) node[anchor=base]{\fontsize{9.39}{11.27}\selectfont $\ldots$};
\draw(80.00,115.00) node[anchor=base]{\fontsize{9.39}{11.27}\selectfont $\vdots$};
\draw(100.00,115.00) node[anchor=base]{\fontsize{9.39}{11.27}\selectfont $\vdots$};
\definecolor{L}{rgb}{0,0,0}
\path[line width=0.30mm, draw=L] (130.00,170.00) -- (130.00,130.00) -- (150.00,130.00) -- (150.00,150.00) -- (170.00,150.00) -- (170.00,170.00) -- cycle;
\path[line width=0.30mm, draw=L] (150.00,170.00) -- (150.00,150.00);
\path[line width=0.30mm, draw=L] (130.00,150.00) -- (150.00,150.00);
\draw(122.00,95.00) node[anchor=base west]{\fontsize{9}{10.24}\selectfont \# boxes = $d_i$};
\definecolor{F}{rgb}{0.565,0.933,0.565}
\path[line width=0.30mm, draw=L, fill=F] (90.00,170.00) [rotate around={270:(90.00,170.00)}] rectangle +(40.00,20.00);
\path[line width=0.30mm, draw=L] (90.00,170.00) -- (70.00,170.00) -- (70.00,130.00) -- (90.00,130.00);
\path[line width=0.30mm, draw=L] (70.00,150.00) -- (110.00,150.00);
\path[line width=0.30mm, draw=L, fill=F] (90.00,110.00) [rotate around={270:(90.00,110.00)}] rectangle +(20.00,20.00);
\path[line width=0.30mm, draw=L] (90.00,70.00) [rotate around={90:(90.00,70.00)}] rectangle +(20.00,20.00);
\path[line width=0.30mm, draw=L] (70.00,90.00) -- (70.00,110.00) -- (90.00,110.00);
\end{tikzpicture}%
\end{center}
\caption{Young diagram}\label{table:tableau}
\end{figure}

\section{Geodesic cost and curvature invariants} \label{s:invarianti}
\begin{definition}
Let $x_0\in \bbR^n$ and fix an optimal trajectory $x_{\bar{u}}(t)$ starting at $x_0$ of the optimal control problem \eqref{eq:ocp}. The \emph{geodesic cost} associated with $x_{\bar{u}}$ is the family of functions $\{c_{t}\}_{t>0}$ defined by
\begin{equation}
c_t(x):=-S_t(x,x_{\bar{u}}(t)), \qquad x\in\bbR^n,
\label{eq:geodesic cost2}
\end{equation}
where $S_t$ is the optimal cost function defined in \eqref{eq:optimal cost2}.
\end{definition}

Notice that thanks to Remark \ref{rk:cost function} and the smoothness of optimal trajectories, the geodesic cost is smoooth in $\bbR^+\times\bbR^n$. 

Moreover, for any $t>0$ and $x\in\bbR^n$ there exists a unique minimizer of the cost functional among all the trajectories that connect $x$ with $x_{\bar{u}}(t)$ in time $t$. By the explicit expressions of the extremals in \eqref{eq:extremals} and of the optimal control $\bar u$, we can write the explicit formula:
$$c_t(x)=-\frac{1}{2}p_0^*\Gamma_t p_0+p_0^*(x-x_0)-\frac{1}{2}(x-x_0)^*\Gamma_t^{-1}(x-x_0),$$
where $x_{\bar u}$ is a solution of the Hamiltonian system with initial data $(p_{0},x_{0})$ and  $\Gamma_t$ is the invertible matrix defined in \eqref{eq:Gamma}.


Then we define the following family of quadratic forms, $\calQ (t)$, on $\bbR^k$:
\begin{equation}
\calQ (t):=B^*\left(d^2_{x_0}\dot{c}_t\right)B=-\frac{d}{dt}B^*\Gamma_t^{-1}B.
\label{eq:Qlambda}
\end{equation}
This family of operators is the linear quadratic counterpart of the more general family of operators introduced in  \cite[Chapter 4]{article:AgrachevBarilariRizzi} for the wider class of non linear control systems that are affine in the control.
\begin{remark} Actually the family of operators $\mathcal{Q}(t)$ does not depend on the initial data $(p_{0},x_{0})$ of the optimal trajectory and is the same for any geodesic. This is saying that it is an intrinsic object of the pair  control system and cost.

Moreover, let us stress that $\calQ(t)$ is an intrinsic object of the optimal control problem, i.e., it does not depend on the chosen coordinate on $\bbR^n$.

Indeed let $y=Cx$, with $C$ an $n\times n$ invertible matrix, be a linear change of coordinates on $\bbR^n$. In the new coordinates the dynamical system \eqref{eq:ocp} is rewritten as
$$\dot{y}=\tilde{A}y+\tilde{B}u,$$
where $\tilde{A}=CAC^{-1}$ and $\tilde{B}=CB$. Since the matrix $C$ is invertible the dimensions $d_i$ remain unchanged, and in particular $\rk \, B=\rk\,  \tilde{B}=k$. The matrix $\tilde{\Gamma}_t$ for the new coordinate system can be easily recovered by definition \eqref{eq:Gamma} and we find $\tilde{\Gamma}_t=C\Gamma_t C^*$. This implies in particular that the definition of $\calQ(t)$ in \eqref{eq:Qlambda} is independent on the coordinates. See also \cite{article:AgrachevBarilariRizzi} for a  general discussion about the geodesic cost.
\end{remark}

The next theorem shows the asymptotic behavior of the family $\calQ(t)$ for small $t>0$. The proof of this result, in its general setting, can be found in \cite[Chapter 4]{article:AgrachevBarilariRizzi}. 

\begin{theorem}[Theorem A in \cite{article:AgrachevBarilariRizzi}]\label{th:A}
Let $x_{\bar{u}}:[0,T]\rightarrow\bbR^n$ be an optimal trajectory. 
The function $t\mapsto t^2\calQ(t)$ defined in Eq.\ \eqref{eq:Qlambda} can be extended to a smooth family of symmetric operators on $\bbR^n$ for all $t\geq 0$. 
In particular, for every fixed $h\in\bbN$, one has the following Laurent expansion
\begin{equation}
\calQ(t)=\frac{1}{t^2}\calI+\sum_{i=0}^h\calQ^{(i)}t^i+O\left(t^{h+1}\right),\qquad t>0.
\label{eq:Q}
\end{equation}
Moreover all the matrices $\calI$ and $\calQ^{(i)}$ are symmetric and 
$$\mathrm{tr}\,\calI=\sum_{i=1}^k n_i^2=\sum_{j=1}^{m}(2j-1)d_{j}.$$
\end{theorem}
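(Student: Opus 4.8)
The plan is to prove Theorem \ref{th:A} in the linear-quadratic setting directly from the explicit formula $\calQ(t)=-\frac{d}{dt}B^*\Gamma_t^{-1}B$, reducing everything to the small-$t$ behaviour of the matrix $\Gamma_t=\int_0^t e^{-\tau A}BB^*e^{-\tau A^*}\,d\tau$. The key obstruction is that $\Gamma_t$ degenerates as $t\to 0$ at different rates along the different levels of the flag $E_1\subset E_2\subset\cdots\subset E_m=\bbR^n$, so $\Gamma_t^{-1}$ blows up, and one has to understand this blow-up precisely enough to extract the $1/t^2$ Laurent coefficient together with the trace identity.

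First I would choose linear coordinates adapted to the flag: pick a basis of $\bbR^n$ so that the first $d_1=k$ vectors span $E_1=D$, the next $d_2$ complete a basis of $E_2$, and so on, with the $j$-th block of size $d_j$. In these coordinates, using $e^{-\tau A}B = B - \tau AB + \frac{\tau^2}{2}A^2B - \cdots$, one sees that the $(a,b)$ block of $\Gamma_t$ has a leading term of order $t^{a+b-1}$ (the block corresponding to $E_a/E_{a-1}$ paired with $E_b/E_{b-1}$), because $A^{a-1}B$ is the first power of $A$ hitting level $a$. This means $\Gamma_t = \Delta_t\, G_t\, \Delta_t$ where $\Delta_t=\diag(t^{1/2}I_{d_1}, t^{3/2}I_{d_2},\ldots, t^{(2m-1)/2}I_{d_m})$ and $G_t=G_0+O(t)$ with $G_0$ an explicit invertible constant matrix (built from the structure constants of $B,AB,\ldots$). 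This is the standard "nilpotent approximation / dilation" structure, and proving it rigorously is the main technical step: one must verify $G_0$ is invertible, which follows from Kalman's condition since $G_0$ is (up to a positive factor) the Gram-type matrix of the leading-order controllable directions.

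Then $\Gamma_t^{-1}=\Delta_t^{-1}G_t^{-1}\Delta_t^{-1}$, and since $G_t^{-1}$ is smooth in $t$ near $0$, I can write $B^*\Gamma_t^{-1}B=\frac{1}{t}M(t)$ where $M(t)$ is smooth, because $B$ has columns lying in the first block so $\Delta_t^{-1}B = t^{-1/2}(\text{first block of }B)$ and the two factors of $t^{-1/2}$ combine. Hence $\calQ(t)=-\frac{d}{dt}\big(t^{-1}M(t)\big)=t^{-2}M(t)-t^{-1}M'(t)$, giving exactly a Laurent expansion of the form \eqref{eq:Q} with $\calI=M(0)$ and $\calQ^{(i)}$ polynomial in the Taylor coefficients of $M$ and hence symmetric (symmetry of all coefficients is inherited from symmetry of $\Gamma_t^{-1}$, checked termwise). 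The claim $t^2\calQ(t)$ extends smoothly to $t\geq 0$ is then immediate from $t^2\calQ(t)=M(t)-tM'(t)$.

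For the trace identity I would compute $\trace\calI=\trace M(0)=\trace\big(B^*\Gamma_t^{-1}B\big)\cdot t\big|_{t\to 0}$ using the adapted coordinates: only the $E_1$–$E_1$ block of $\Gamma_t^{-1}$ matters when sandwiched by $B$, and by the block structure of $\Gamma_t$ the inverse of a block matrix with $(a,b)$ entry $\sim t^{a+b-1}$ has $(1,1)$ block $\sim t^{-1}$ times a constant matrix $N$ with $\trace N = \sum_{i} n_i^2$, where $n_i$ is the length of the $i$-th row of the Young diagram; this is a purely combinatorial fact about inverting block matrices with this staircase degeneration (equivalently, it is the statement $\sum_i n_i^2=\sum_j (2j-1)d_j$, obtained by counting, for each box in row $i$, column $j$, the contribution $2j-1$ and summing rows versus columns). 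I expect this bookkeeping — matching the $(1,1)$ block of the inverse to the row-lengths $n_i$ — to be the one genuinely delicate point, and I would handle it either by the Schur-complement recursion eliminating the blocks $d_m, d_{m-1},\ldots$ from the bottom up, or by directly invoking the identity $\sum_i n_i^2=\sum_j(2j-1)d_j$ proved by the geometry of the tableau in Figure \ref{table:tableau}, noting both equal $\trace\calI$.
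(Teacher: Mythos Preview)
The paper does not actually prove Theorem~\ref{th:A}: immediately before the statement it says ``The proof of this result, in its general setting, can be found in \cite[Chapter 4]{article:AgrachevBarilariRizzi}''. So there is no proof in the paper to compare against; you are supplying a self-contained argument in the linear--quadratic case, which is a genuine addition.

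Your scheme for the Laurent expansion is correct and is in fact the same factorization the paper itself establishes later (equation~\eqref{eq:GX}): writing $\Gamma_t=J_{\sqrt t}\,G_t\,J_{\sqrt t}$ with $G_t$ smooth and $G_0=\calX$ invertible immediately gives $B^*\Gamma_t^{-1}B=t^{-1}M(t)$ with $M(t)=[G_t^{-1}]_{11}$ smooth (in coordinates where $B$ sits in the first block). Your derivation $\calQ(t)=t^{-2}M(t)-t^{-1}M'(t)$ then yields the Laurent form with no $t^{-1}$ term and $\calI=M(0)$; the smoothness of $t^2\calQ(t)=M(t)-tM'(t)$ and the symmetry of all coefficients follow as you say.

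The one genuine gap is the trace identity. Your claim that $\trace N=\sum n_i^2$ is ``a purely combinatorial fact about inverting block matrices with this staircase degeneration'' is not right as stated: the $(1,1)$ block of $\calX^{-1}$ depends on the actual entries $\hat v_i$, not only on the block sizes $d_j$, so a bare Schur-complement recursion on block sizes cannot produce the answer. What makes the trace constant is the specific block-Hilbert structure $\calX_{ab}=\hat v_a\hat v_b^*/(a+b-1)$. A clean way to close the argument---and one that reuses exactly the ingredients already in the paper---is the determinant trick from Section~\ref{s:proof1}: since $\det D_t\sim c_0\,t^{\calN}$ (this follows from your factorization, with $\calN=\sum_j(2j-1)d_j$) and
\[
\frac{d}{dt}\log\det D_t \;=\; 2\,\trace A \;+\; \trace\!\big(B^*D_t^{-1}B\big),
\]
the $t^{-1}$ residue of $\trace(B^*D_t^{-1}B)$ is $\calN$. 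But $D_t=-\Gamma_{-t}$, so $\trace(B^*D_t^{-1}B)=-F(-t)$ where $F(t)=\trace(B^*\Gamma_t^{-1}B)=t^{-1}\trace M(t)$; a Laurent series and its reflection $-F(-t)$ share the same $t^{-1}$ residue, hence $\trace\calI=\trace M(0)=\calN$. The combinatorial identity $\sum_i n_i^2=\sum_j(2j-1)d_j$ that you quote then finishes the statement. (Amusingly, in Section~\ref{s:proof1} the paper runs this same determinant identity in the opposite direction, \emph{invoking} Theorem~\ref{th:A} to expand $\trace(B^*D_t^{-1}B)$; reversing the logic turns it into the missing step of your proof.)
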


The expansion \eqref{eq:Q} defines a sequence of symmetric operators (or matrices) $\calI$ and $\calQ^{(i)}$, for $i\in\bbN$. These operators are canonically associated to the optimal control problem.


\section{Small time asymptotics at an equilibrium point} \label{s:proof1}
In this section we prove Theorem \ref{th:Ker}, concerning the small time asymptotics of the fundamental solution $p(t,x,x)$ at a point $x\in \mathrm{Ker}A$. We will write this expansion in terms only of the drift field and the invariant $\calQ^{(i)}$ introduced in Theorem \ref{th:A}.
%
%
\begin{remark}
The order of the asymptotics, $\calN$, was already found in \cite{paoli} for a wider class of hypoelliptic operators. In particular, for the linear case the number $\calN$ is determined only by the numbers $k_i=\rk\{B,AB,\ldots,A^{i-1}B\}$, for every $1\leq i\leq m-1$, and $k_0=0$. Indeed $\calN$ can be rewritten as follows
$$\calN:=\sum_{i=1}^m (2i-1)(k_i-k_{i-1}).$$

This number is actually equal to the trace of $\calI$, given in Theorem \ref{th:A}, since by the classical identity $\sum_{i=1}^n (2i-1)=n^2$ we have
$$\mathrm{tr}\,\calI=\sum_{i=1}^k n_i^2=\sum_{i=1}^k\sum_{j=1}^{n_i}(2j-1)=\sum_{j=1}^m(2j-1)(k_j-k_{j-1})=\calN.$$
\end{remark}
Recall now the expression of the fundamental solution $p(t,x,y)$ given in \eqref{eq:kernel}:
$$p(t,x,y)=\frac{e^{-\frac{1}{2}(y-e^{tA}x)^*D_t^{-1}(y-e^{tA}x)}}{(2\pi)^{n/2}\sqrt{\det{D_t}}},$$
where the matrix $D_t$  is defined as
\begin{equation}
D_t=e^{tA}\int_{0}^t e^{-\tau A}BB^* e^{-\tau A^*}d\tau e^{tA^*}=e^{tA}\Gamma_t e^{tA^*}.
\label{eq:Dt}
\end{equation}
A change of variable in the integral defining $\Gamma_t$ gives easily that $D_t=-\Gamma_{-t}$.

If a point $x$ is an equilibrium point for the drift field (i.e., $x\in \mathrm{Ker}A$), then the asymptotics of $p(t,x,x)$ on the diagonal is determined uniquely by the Taylor expansion of $\sqrt{\det{D_t}}^{-1}$, since $e^{tA}x=x$ for every $t$. 

Let $d(t):=\det{D_t}$, then $d(t)$ satisfies
\begin{equation}
d'(t)=d(t) \mathrm{tr}(D_t^{-1}\dot{D}_t)=
d(t)(2 \mathrm{tr}(A)+\mathrm{tr}(B^*D_t^{-1}B)).
\label{eq:derivatadt}
\end{equation}
Moreover, since $d(t)$ has a simple pole at $t=0$ of order $\calN$ we can write the determinant as $d(t)=t^{\calN} f(t)$, for some smooth  function $f$ non-vanishing at zero. Substituting  this expression in \eqref{eq:derivatadt} one gets
\begin{equation}
\frac{d'(t)}{d(t)}=\frac{\calN}{t}+\frac{f'(t)}{f(t)}=2 \mathrm{tr}(A)+\mathrm{tr}(B^*D_t^{-1}B).
\label{eq:3b}
\end{equation}
Theorem \ref{th:A} illustrates the asymptotic expansion of $-\frac{d}{dt}\mathrm{tr}(B^*\Gamma_t^{-1}B)$ in terms of the invariants $\mathcal{I}$ and $\calQ^{(i)}$. Its integral is
$$-\mathrm{tr}(B^*\Gamma_t^{-1}B)=-\frac{\calN}{t}+c + \sum_{i=0}^h\mathrm{tr}(\calQ^{(i)})\frac{t^{i+1}}{i+1}+O\left(t^{h+2}\right) $$
for some constant $c$, coming from the integration. Recall that $D_t=-\Gamma_{-t}$. Thus in Eq. \eqref{eq:3b} we have
$$\frac{f'(t)}{f(t)}=2 \mathrm{tr}(A)+c+\sum_{i=0}^h(-1)^{i+1}\mathrm{tr}(\calQ^{(i)})\frac{t^{i+1}}{i+1}+O(t^{h+2}).$$


Then we can write the determinant of $D_t$ in the following exponential form depending on the invariants $\calQ^{(i)}$
\begin{equation}
\det D_t=c_0 t^\calN e^{(c+2\mathrm{tr}A)t+\sum_{i=0}^h (-1)^{i+1}\tr\mathcal{\calQ}^{(i)}\frac{t^{i+2}}{(i+1)(i+2)}+O(t^{h+3})},
\label{eq:5}
\end{equation}
for some constant $c_0$ and the constant $c:=\mathrm{tr}(C)$.
In particular, one can easily find the first terms of the Taylor expansion of $\det D_t$ at $t=0$, namely
\begin{equation}
\det D_t=c_0 t^\calN\left[1+(c+2\mathrm{tr}A)t+\frac{(c+2\mathrm{tr}A)^2}{2}t^2-\frac{\tr\mathcal{Q}^{(0)}}{2}t^2+o(t^2)\right],
\label{eq:4}
\end{equation}
determined up to a constant $c$.
\begin{remark}
Notice that, in order to have a non degenerate matrix $D_t$, it is essential  that the order $\calN$ of $\det D_t$ is the same term that appears in the asymptotics of $\mathrm{tr}\calQ(t)$.
\end{remark}
\subsection{The first term in the expansion}
To find the constant $c$ in Eq.\ \eqref{eq:4} let us compute the first terms of the expansion of $\det D_t$. 
The derivative matrix $\dot{\Gamma}_t=e^{-tA}BB^*e^{-tA^*}$ is positive semi-definite and can be written as
$$\dot{\Gamma}_t=V(t)V(t)^*,$$
for $V(t)=e^{-tA}B$. Let $v_i(t)$ denote the columns of $V(t)$ and define the filtration $E_1\subset E_2\subset\ldots \subset E_m=\bbR^n$ as
\begin{equation}\label{eq:E2}
E_i=\mathrm{span}\{v_j^{(l)}(0), \; 1\leq j\leq k, 0,\leq l\leq i-1\}.
\end{equation}
Therefore $E_i$  is the subspace of $\bbR^n$ defined by the columns of the matrices $A^jB$ for $0\leq j\leq i-1$ and has dimension $k_i$. Choose coordinates on $\bbR^n$ adapted to this filtration, i.e., associated with a basis $\{e_{1},\ldots,e_{n}\}$ of $\R^{n}$ such that $\mathrm{span}\{e_1,\ldots,e_{k_i}\}=E_i$. In these coordinates $V(t)$ has a peculiar structure, namely
\begin{equation}\label{eq:V}
V(t)=\left(
\begin{array}{c}
\widehat{v}_1\\
t\widehat{v}_2\\
\vdots\\
t^{m-1}\widehat{v}_1
\end{array}
\right)+\left(\begin{array}{c}
t\widehat{w}_1\\
t^2\widehat{w}_2\\
\vdots\\
t^{m}\widehat{w}_1
\end{array}\right)+\left(\begin{array}{c}
O(t^2)\\
O(t^3)\\
\vdots\\
O(t^{m+1})
\end{array}\right),
\end{equation}
where $\widehat{v}_i$ and $\widehat{w}_i$ are $(k_i-k_{i-1})\times k$ constant matrices and every $\widehat{v}_i$ has maximal rank. Let $\widehat{V}(t)$ and $\widehat{W}(t)$ be the first and second principal parts of $V(t)$, then the Taylor series of the matrix $\Gamma_t$ can be found as
$$\Gamma_t=\int_0^t V(\tau)V(\tau)^*d\tau=\int_0^t \widehat{V}(\tau)\widehat{V}(\tau)^*+\left(\widehat{V}(\tau)\widehat{W}(\tau)^*+\widehat{W}(\tau)\widehat{V}(\tau)^*\right)d\tau+r(t),$$
where $r(t)$ is a remainder term. In components we write $\Gamma_t$ as a $m\times m$ block matrix, whose block $\Gamma_{ij}(t)$ is the $(k_i-k_{i-1})\times (k_j-k_{j-1})$ matrix with Taylor expansion
\begin{equation*}
\begin{split}
\Gamma_{ij}(t)=&\left(\frac{\widehat{v}_i\widehat{v}_j^*}{i+j-1}\right)t^{i+j-1}+\left(\frac{\widehat{v}_i\widehat{w}_j^*+\widehat{w}_i\widehat{v}_j^*}{i+j}\right)t^{i+j}+O\left(t^{i+j+1}\right)\\
&=\calX_{ij}t^{i+j-1}+\calY_{ij}t^{i+j}+O\left(t^{i+j+1}\right),
\end{split}
\end{equation*}
where $\calX$ and $\calY$ are $m\times m$ block matrices implicitly defined by this formula. Moreover $\calX$ is invertible. Let $J_{\sqrt{t}}$ be the $n\times n$ diagonal matrix whose $j$-th element is equal to $\sqrt{t}^{2i-1}$ for $k_{i-1}<j\leq k_i$, then
\begin{equation}\label{eq:GX}
\Gamma_t=J_{\sqrt{t}}\left(\calX+t\calY +O(t^2)\right)J_{\sqrt{t}}.
\end{equation}
As expected the determinant of $\Gamma_t$ is of order $t^\calN$ and its Taylor expansion is computed in terms of $\calX$ and $\calY$ as
$$\det \Gamma_t= t^\calN \det( \calX)(1+ \mathrm{tr}(\calX^{-1}\calY)t+o(t)).$$

Now we are ready to find the determinant of $D_t$. This follows from identity $D_t=e^{tA}\Gamma_t e^{tA^*}=-\Gamma_{-t}$. 
Indeed on the one hand the determinant of $D_t$ is given by
\begin{equation}
\begin{split}
\det D_t&=\det(e^{tA}\Gamma_t e^{tA^*})=\det(e^{2tA})\det \Gamma_t\\
&=\det( \calX) t^\calN \left[1+\left(2\mathrm{tr}A+ \mathrm{tr}(\calX^{-1}\calY)\right)t+o(t)\right].
\end{split}
\label{eq:Dt1}
\end{equation}
On the other hand
\begin{equation}
\begin{split}
\det D_t&=\det(-\Gamma_{-t})=(-1)^n(-t)^\calN \det (\calX)[1-t\; \mathrm{tr}(\calX^{-1}\calY)+o(t)]\\
&=\det( \calX) t^\calN\left[1- \mathrm{tr}\left(\calX^{-1}\calY\right)t+o(t)\right].
\end{split}
\label{eq:Dt2}
\end{equation}
The last identity follows since $n$ and $\calN$ have the same parity, indeed
$$\calN=\sum_{i=1}^m (2i-1)k_i=2M-n,$$
where $M=\sum_{i=1}^m ik_i$.
Comparing equations \eqref{eq:Dt1} and \eqref{eq:Dt2} we find $\mathrm{tr}(\calX^{-1}\calY)=-\mathrm{tr}A$, that means
$$\det D_t=\det( \calX) t^\calN (1+(\mathrm{tr}A) t+o(t)).$$
It follows that in formula \eqref{eq:4} we have $c=-\mathrm{tr}A$ and $c_{0}=\det(\calX)>0$. 
This allows us to conclude that the asymptotics of the fundamental solution in $x=y\in \ker A$ for small time is
\begin{equation}\label{eq:sqrt}
\begin{split}
p(t,x,x)&=\frac{1}{(2\pi)^{n/2}\sqrt{\det D_t}}\\
&=\frac{ t^{-\calN/2}}{(2\pi)^{n/2}\sqrt{c_0}}e^{-\frac{\tr A}{2} t}e^{\frac{1}{2}\sum_{i=0}^h (-1)^{i}\tr\calQ^{(i)}\frac{t^{i+2}}{(i+1)(i+2)}+O(t^{h+3})}\\
&=\frac{ t^{-\calN/2}}{(2\pi)^{n/2}\sqrt{c_0}}\left(1+\sum_{i=1}^h a_i t^i + O(t^{h+1})\right),
\end{split}
\end{equation}
where the coefficients $a_i$ can be explicitly computed from the expansion of the exponential. It follows that every $a_i$ is polynomial in the components of $\tr A$ and $\tr \calQ^{(j)}$ for $j\leq i-2$ of order $i$ and does not depend on $x$. In particular the first coefficients are computed as follows
\begin{gather}
a_{1}=-\frac{\tr A}{2},\qquad 
a_{2}=\frac{(\tr A)^{2}}{8}+\frac{\tr \calQ^{(0)}}{4},\\[0,2cm]
a_{3}=-\frac{\tr \calQ^{(1)}}{12}-\frac{\tr A \,\tr \calQ^{(0)}}{8} -\frac{1}{48} (\tr A)^{3}.
\end{gather}

\subsection{Proof of Theorem \ref{th:cost}}\label{s:proof1b}
The proof of Theorem \ref{th:cost} is now an easy consequence of the analysis done untill here.

Indeed by Remark \ref{rk:cost function}, the geodesic that connects the point $x$ to $y$ in time $t$ has starting associated covector  $p_0=\Gamma_t^{-1}\left(e^{-t A}y-x\right)$, therefore the minimal cost to go from $x$ to $y$ in time $t$ is
$$S_t(x,y)=\frac{1}{2}\left(e^{-t A}y-x\right)^*\Gamma_t^{-1}\left(e^{-t A}y-x\right)=\frac{1}{2}(y-e^{tA}x)^*D_t^{-1}(y-e^{tA}x).$$
This is exactly the quantity appearing at the exponential of the heat kernel, that therefore can be written in terms of the optimal cost as
$$p(t,x,y)=\frac{e^{-S_t(x,y)}}{(2\pi)^{n/2}\sqrt{\det{D_t}}}.$$ 
The conclusion of the theorem is then a consequence of the Taylor expansion of $\frac{1}{\sqrt{\det{D_t}}}$, given in \eqref{eq:sqrt}.

\section{Small time asymptotics out of the equilibrium} \label{s:proof2}
In this section we prove Theorem \ref{th:nonKer}, concerning the small time asymptotics of the fundamental solution at a point, $x_0$, where the drift field is not zero.

By a translation of the origin, we can assume that $x_0=0$. This produces a no more linear, but affine drift field, i.e., $X_0(x)=Ax+\alpha$, where $\alpha:=A x_0$ is a column vector equal to the value of the drift at $x_0$. Then we can study the original asymptotics at $x_0$, through the asymptotics at the origin of the heat kernel of the linear pde, where to the drift field we add the constant value $\alpha$. As shown in the Appendix its fundamental solution is
$$p(t,x,y)=\frac{e^{\varphi(t,x,y)}}{(2\pi)^{n/2}\sqrt{\det D_t}},$$
where
$$\varphi(t,x,y)=-\frac{1}{2}\left(y-e^{tA}\left(x+\int_0^t e^{-sA}ds\alpha\right)\right)^*D_t^{-1}\left(y-e^{tA}\left(x+\int_0^t e^{-sA}ds\alpha\right)\right)$$
and $D_t=e^{tA}\Gamma_t e^{tA^*}$ is the same covariance matrix as in \eqref{eq:Dt}. The original asymptotics of the fundamental solution at $x_0$ is given by the asymptotics of $(\det D_t)^{-1/2}$, found in the previous section, and the asymptotics of $\varphi$ in $x=y=0$, i.e.,
$$\varphi(t,0,0)=-\frac{1}{2}\alpha^*\left(\int_0^t e^{-sA}ds\right)^*\Gamma_t^{-1}\left(\int_0^t e^{-sA}ds\right)\alpha.$$

Let $E_i$ be the subspaces defined in \eqref{eq:E2}. If $\alpha= Ax_0\in E_i\setminus E_{i-1}$, then $A^j \alpha\in E_{i+j}$ (actually it could possibly live in the previous subspaces, but not in a bigger one). Moreover $\int_0^t e^{-sA}ds=\sum_{i=1}^m -\frac{(-t)^i}{i!} A^{i-1}+O(t^{m+1})$. Therefore in coordinates adapted to the filtration $\{E_j\}_j$, the column vector $\int_0^t e^{-sA}ds\,\alpha$ can be written in $m$ blocks of height $d_j$, as
$$\int_0^t e^{-sA}ds\,\alpha=\left(\begin{array}{c}
	t\alpha_1 \\
	t \alpha_2 \\
	\vdots \\
	t \alpha_i  \\
	t^2 \alpha_{i+1} \\
	\vdots \\
	t^{m-i+1} \alpha_m 
\end{array}\right)+\left(\begin{array}{c}
	O(t^2) \\
	O(t^2) \\
	\vdots \\
	O(t^2) \\
	O(t^3) \\
	\vdots \\
	O(t^{m-i+2})
\end{array}\right),
$$
where $\alpha_j$ is a vector of length $d_j$ for every $1\leq j\leq m$ and $\alpha_i$ is not zero. The matrix $\Gamma_t^{-1}$ can be written as a product of
$$\Gamma_t^{-1}=J_{1/\sqrt{t}} (\calX^{-1}+O(t))J_{1/\sqrt{t}},$$
where $J_{1/\sqrt{t}}$ and $\calX$ are the matrices introduced in \eqref{eq:GX}. Notice that the multiplication of $J_{1/\sqrt{t}}$ with $\int_0^t e^{-sA}ds\,\alpha$ gives the vector
$$
J_{1/\sqrt{t}}\int_0^t e^{-sA}ds\,\alpha=\left(\begin{array}{c}
	\sqrt{t}\alpha_1  \\
	\frac{1}{\sqrt{t}} \alpha_2 \\
	\vdots \\
	\frac{1}{\sqrt{t}^{2i-3}} \alpha_i \\
	\frac{1}{\sqrt{t}^{2i-3}} \alpha_{i+1}  \\
	\vdots \\
	\frac{1}{\sqrt{t}^{2i-3}} \alpha_m 
\end{array}\right)+\left(\begin{array}{c}
	O(t\sqrt{t}) \\
	O(\sqrt{t}) \\
	\vdots \\
	O(\frac{1}{\sqrt{t}^{2i-1}}) \\
	O(\frac{1}{\sqrt{t}^{2i-1}}) \\
	\vdots \\
	O(\frac{1}{\sqrt{t}^{2i-1}})
\end{array}\right).
$$
From here we see immediately that, since $\calX^{-1}$ has maximal rank and is positive definite, the scalar product
$$(J_{1/\sqrt{t}}\int_0^t e^{-sA}ds\,\alpha)^*\calX^{-1}(J_{1/\sqrt{t}}\int_0^t e^{-sA}ds\,\alpha)$$ 
has a simple pole of order $-(2i-3)$ in $0$. So if $\alpha\in E_i\setminus E_{i-1}$ with $i>1$, then $\varphi(t,0,0)$ blows up for small $t$ as $\frac{1}{t^{2i-3}}$, i.e., the heat kernel decreases with exponential velocity precisely as
$$p(t,x_0,x_0)= \frac{t^{-\calN/2}}{(2\pi)^{n/2}\sqrt{c_0}}\exp \left(-\frac{C_1+O(t)}{t^{2i-3}}\right) \qquad \mbox{ for } t\searrow 0,\; Ax_0\in E_i\setminus E_{i-1},$$
for a positive constant $C_1$. This proves the second part of Theorem \ref{th:nonKer}.

%

Let us consider now the case $\alpha\in\mathrm{span}B$. With this assumption the function $\varphi(t,0,0)$ is smooth in $t=0$ and we want to find its exact value at the first order.

With a change of variables we can assume that the matrix $B$ is the identity matrix in the first $k$ rows and zero on the last $n-k$ rows. Moreover let $y\in \bbR^k$ be such that $Ax_0=By$. We claim that  
$$\varphi(t,0,0)=-t\frac{|y|^{2}}{2}+o(t)=-t\frac{|Ax_0|^{2}}{2}+o(t).$$ 
We can write the function $\varphi$ as
$$\varphi(t,0,0)=-\frac{1}{2}y^*\left(\int_0^t e^{-sA}Bds\right)^*\Gamma_t^{-1}\left(\int_0^t e^{-sA}Bds\right)y$$
and we need to characterize $e^{-sA}B$. Using the results of the previous section, in coordinates adapted to the filtration we can write 
$$\int_0^t e^{-sA}Bds=\left(
	\begin{array}{c}
		t\widehat{v}_1 \\
		t^2\frac{\widehat{v}_2}{2} \\
		\vdots \\
		t^{m}\frac{\widehat{v}_m}{m}
	\end{array}
	\right)
+\left(
	\begin{array}{c}
		O(t^2) \\
		O(t^3) \\
		\vdots \\
		O(t^{m+1})
	\end{array}\right),$$
where the $\widehat{v}_i$ are defined in \eqref{eq:V}. They are $d_i\times k$ matrix of maximal rank and $\widehat{v}_1$ is the $k\times k$ identity matrix. We can highlight the dependence on $t$ by multiplying on the left by the diagonal matrix $K_t$ with $j$-th entry equal to $t^{i}$, for $k_{i-1}<j\leq k_i$. Also the matrix $\Gamma_t^{-1}$ can be written as a product (see \eqref{eq:GX}), then 
$$\int_0^t e^{-sA}Bds= K_t \left(\left(\begin{array}{c}
\mathbb{I}_k \\
C
\end{array}\right)+O(t)\right)\quad \mbox{ and } \quad \Gamma_t^{-1}=J_{1/\sqrt{t}}\left(\calX^{-1}+O(t)\right)J_{1/\sqrt{t}},$$
where $C$ is the $(n-k)\times k$ matrix composed by the last $n-k$ rows of the principal part of $\int_0^t e^{-sA}Bds$. Since $K_t \cdot J_{1/\sqrt{t}}=\sqrt{t}\, \mathbb{I}_n$, then
\begin{equation}
\varphi(t,0,0)=-\frac{t}{2}y^*\left(\begin{array}{c}
\mathbb{I}_k \\
C
\end{array}\right)^*\calX^{-1}\left(\begin{array}{c}
\mathbb{I}_k \\
C
\end{array}\right)y+O(t^2).
\label{eq:varphiB}
\end{equation}

Notice that $[\calX]_{11}=\mathbb{I}_k$ and $[\calX]_{i1}=\frac{\widehat{v}_i}{i}=[\calX]_{1i}^*,$ for every $2\leq i\leq m$, hence $\calX$ can be written as a block matrix
$$\calX=\left(\begin{array}{cc}
 \mathbb{I}_k & C^* \\
 C & E 
\end{array}\right),$$
where $E$ is a $(n-k)\times(n-k)$ matrix.

\begin{lemma}
The inverse matrix $\calX^{-1}$ is the block matrix 
$$\calX^{-1}=\begin{pmatrix}
[\calX^{-1}]_{11} & [\calX^{-1}]_{12} \\
[\calX^{-1}]^{*}_{12} & [\calX^{-1}]_{22} 
\end{pmatrix},$$
where
\begin{gather}
[\calX^{-1}]_{11}=\mathbb{I}_k+C^*(E-CC^*)^{-1}C, \\
[\calX^{-1}]_{12}=-C^*(E-CC^*)^{-1}, \qquad [\calX^{-1}]_{22}=(E-CC^*)^{-1}.
\end{gather}
\end{lemma}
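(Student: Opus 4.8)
The plan is to recognize this as the classical Schur-complement formula for the inverse of a symmetric bordered matrix and to verify it directly. The one preliminary that needs an argument is that $S:=E-CC^*$ is invertible. I would deduce this from the invertibility of $\calX$ (established above) via the block factorization
\[
\calX=\begin{pmatrix}\mathbb{I}_k & 0\\ C & \mathbb{I}_{n-k}\end{pmatrix}\begin{pmatrix}\mathbb{I}_k & 0\\ 0 & S\end{pmatrix}\begin{pmatrix}\mathbb{I}_k & C^*\\ 0 & \mathbb{I}_{n-k}\end{pmatrix},
\]
whose outer (unipotent) factors are invertible, so that $\diag(\mathbb{I}_k,S)$ is invertible and hence so is $S$. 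I would also note that $\calX$ is symmetric, since its blocks are $\calX_{ij}=\widehat{v}_i\widehat{v}_j^*/(i+j-1)=\calX_{ji}^*$; in particular $E=E^*$, so $S=S^*$.

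Next I would set $M$ to be the matrix claimed to equal $\calX^{-1}$, with blocks $[M]_{11}=\mathbb{I}_k+C^*S^{-1}C$, $[M]_{12}=-C^*S^{-1}$, $[M]_{21}=-S^{-1}C$, $[M]_{22}=S^{-1}$; since $S=S^*$ one has $[M]_{21}=[M]_{12}^*$, so $M$ is symmetric and matches the displayed expression. It then remains to check $M\calX=\mathbb{I}_n$ block by block:
\[
[M\calX]_{11}=(\mathbb{I}_k+C^*S^{-1}C)-C^*S^{-1}C=\mathbb{I}_k,\qquad [M\calX]_{21}=-S^{-1}C+S^{-1}C=0,
\]
\[
[M\calX]_{12}=C^*+C^*S^{-1}(CC^*-E)=C^*-C^*S^{-1}S=0,\qquad [M\calX]_{22}=S^{-1}(E-CC^*)=\mathbb{I}_{n-k}.
\]
Since $\calX$ is a square matrix, $M\calX=\mathbb{I}_n$ forces $M=\calX^{-1}$, which is the assertion of the lemma.

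I do not expect any real obstacle: this is a routine linear-algebra identity, and the only step deserving a line of care is the invertibility of $S=E-CC^*$, which is handled by the block factorization above (alternatively, inverting the three factors in that factorization and multiplying them out reproduces the same formula and makes the role of the Schur complement transparent). One could equally well argue via positive definiteness of $\calX$ — which holds because $\Gamma_t$ is positive definite for $t>0$ and $\calX=\lim_{t\to0^+}J_{1/\sqrt{t}}\Gamma_tJ_{1/\sqrt{t}}$ with $\det\calX=c_0\neq0$ — but the factorization argument is shorter and needs no metric input.
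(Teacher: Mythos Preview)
Your proof is correct and follows essentially the same approach as the paper: both recognize the formula as the standard Schur-complement inverse, with the only substantive point being the invertibility of $S=E-CC^*$. The paper handles this via a kernel argument (if $Sx=0$ then $(-C^*x,x)$ lies in $\ker\calX$), whereas you use the equivalent $LDL^*$ block factorization; both are standard and immediate.
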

\begin{proof}
This is the general expression of the inverse of a block matrix, provided $[\calX]_{11}$ and $E-CC^*$ are not singular. $[\calX]_{11}$ is the identity matrix. Let us show that $E-CC^*$ is not singular. Assume $x\in \bbR^{n-k}$ satisfies $(E-CC^*)x=0$. Then the column vector (of dimension $n$) equal to $(-(C^*x)^*,x^*)^*$ is in the kernel of $\calX$. Therefore $x=0$, since $\calX$ is not singular.
\end{proof}

Applying the Lemma to Eq.\ \eqref{eq:varphiB} we find that
\begin{equation*}
\begin{split}
\varphi(t,0,0)&=-\frac{t}{2}y^*\left([\calX^{-1}]_{11}+([\calX^{-1}]_{12}C)^*+[\calX^{-1}]_{12}C+C^*[\calX^{-1}]_{22}C\right)y+O(t^2)\\
&=-\frac{t}{2}y^*\,\mathbb{I}_k\, y+O(t^2)=-\frac{| Ax_0|^2}{2}t+O(t^2).
\end{split}
\end{equation*}
Taking into account the asymptotics of $(\det D_t)^{-1/2}$ found in the previous section, this completes the proof of Theorem \ref{th:nonKer}.

\appendix
\section{On the fundamental solution}
In the following section we derive the fundamental solution of the linear partial differential equation
\begin{equation}
\frac{\partial\varphi}{\partial t}-\sum_{j=1}^n(\alpha+Ax)_j\frac{\partial\varphi}{\partial x_j} -\frac{1}{2} \sum_{j, h=1}^n (BB^*)_{jh}\frac{\partial^2\varphi}{\partial x_j\partial x_h}=0, \quad \mbox{ for } \varphi\in C^\infty_0(\bbR\times\bbR^n),
\label{eq:pdelin}
\end{equation}
where $\alpha$ is a constant column vector of dimension $n$, $A$ is a $n\times n$ real matrix, which represents the linear part of a \emph{drift} field, and $B$ is a $n\times k$ real matrix, with $1\leq k\leq n$, that generates the diffusion coefficient.

An operator $\partial_t-L$ is hypoelliptic if for every open set $U\in \bbR^n$ and for every function $\varphi$ such that $(\partial_t-L)\varphi\in C^\infty(U)$, then also $\varphi\in C^\infty(U)$. H\"{o}rmander proved in \cite{article:Hormander} that if Kalman's condition on the matrices $A,B$ holds, that is
\begin{equation}
\mathrm{rk}[B,AB,A^2B,\ldots,A^{n-1}B]=n,
\label{eq:kalman}
\end{equation}
then the operator in \eqref{eq:pdelin} is hypoelliptic. Moreover in this case the operator admits a well defined fundamental solution, $p(t,x,y)$, that is given by the probability density  of the solution $\xi_t$ of the associated stochastic differential equation
\begin{equation}
\left\{\begin{array}{l}
d\xi_t=(\alpha+A\xi_t)dt+B dw(t)\\
\xi_0=x
\end{array}\right.
\label{eq:sde}
\end{equation}
where $w(t)=(w_1(t),\ldots,w_k(t))$ is a $k$-dimensional Brownian motion. In other words, $p(t,x,y)$ is the $C^\infty(\bbR^+\times\bbR^n\times\bbR^n)$ function such that for every Borel set $U\in\bbR^n$ the probability of $\xi_t$ to be in $U$ at time $t$ is given by
$$P\left[\xi_t\in U|\xi_0=x\right]=\int_U p(t,x,y)dy.$$
Here $\bbR^+$ denotes the strictly positive real line. The stochastic process solution of \eqref{eq:sde} is equal to
$$\xi_t=e^{tA}\xi_0+e^{tA}\int_0^t e^{-s A}ds \alpha+e^{tA}\int_0^t e^{-sA} B dw(s),$$
as it can be readily verified with a derivation. In particular, if the initial value $\xi_0$ is Gaussian distributed, then $\xi_t$ is a Gaussian process, since by definition of solution $\xi_0$ and $w(t)$ are independent. The initial condition is a Dirac delta centered in $x$, so it is a degenerate Gaussian with mean $x$ and vanishing covariance matrix, therefore to find the distribution of $\xi_t$ it is enough to determine its mean value and covariance matrix. To this end let us recall It\^{o}'s differential formula: let $\psi(t,x)$ be a smooth function, then for every $0\leq s<t$
\begin{equation}
\begin{split}
\psi(t,\xi_t)-\psi(s,\xi_s)&=\int_s^t\frac{\partial \psi(\tau,\xi_\tau)}{\partial t}d\tau\\
+&\int_s^t\sum_{j=1}^n (\alpha+A\xi_\tau)_j\frac{\partial \psi(\tau,\xi_\tau)}{\partial x_j}+\frac{1}{2}\sum_{j,h=1}^n (BB^*)_{jh}\frac{\partial^2 \psi(\tau,\xi_\tau)}{\partial x_j\partial x_h}d\tau\\
+&\int_s^t \sum_{i=1}^k \sum_{j=1}^n B_{ji}\frac{\partial \psi(\tau,\xi_\tau)}{\partial x_j}dw_i(\tau).
\end{split}
\end{equation}
To find the mean value of $\xi_t$ let us fix the auxiliary function $\psi:=x_j$ for $1\leq j\leq n$, then
$$\xi_j(t)-\xi_j(0)=\int_0^t(\alpha+A\xi(\tau))_j d\tau+\int_0^t\sum_{i=1}^k B_{ji}dw_i(\tau).$$
Let $m_j(t)$ be the $j$-th component of the mean value of $\xi_t$. By the previous formula, since the integral of the Brownian motion has zero mean value, we find that $m(t)$ satisfies the differential equation $\dot{m}_j(t)=(\alpha+Am(t))_j$ with initial condition $m(0)=x$. Therefore
$$\mathrm{E}[\,\xi_t\,|\,\xi_0\,]=e^{tA}\left(x+\int_0^t e^{-sA}ds \alpha\right).$$
To compute the covariance matrix let us choose $\psi=(x_j-m_j(t))(x_h-m_h(t))$, then by It\^{o} differential formula it holds
\begin{equation}
\begin{split}
\psi(t,\xi(t))&-\psi(0,\xi(0))=\int_0^t-\dot{m}_j(\tau)(\xi_h(\tau)-m_h(\tau))-\dot{m}_h(\tau)(\xi_j(\tau)-m_j(\tau))d\tau\\
+&\int_0^t(\alpha+A\xi)_j(\xi_h(\tau)-m_h(\tau))+(\alpha+A\xi)_h(\xi_j(\tau)-m_j(\tau))+(BB^*)_{jh}d\tau\\
+&\int_0^t \sum_{i=1}^k B_{ji}(\xi_h(\tau)-m_h(\tau))+B_{hi}(\xi_j(\tau)-m_j(\tau))dw_i(\tau).
\end{split}
\end{equation}
Let us take the expectation value and denote by $\rho_{jh}(t)$ the $jh$-component of the covariance matrix. Recall that $\mathrm{E}[(\xi_j(t)-m_j(t))|\xi(0)]=0$. To evaluate $$\mathrm{E}\left[(A\xi(\tau))_j(\xi_h-m_h(\tau))|\xi(0)\right]$$ we rewrite it as
$$\mathrm{E}\left[(A(\xi(\tau)-m(\tau)))_j(\xi_h-m_h(\tau))+(Am(\tau))_j(\xi_h-m_h(\tau))|\xi(0)\right]=(A\rho(\tau))_{jh}+0.$$
Then $\rho(t)$ satisfies the differential equation $\dot{\rho}_{jh}(t)=(A\rho(t))_{jh}+(A\rho(t))_{hj}+(BB^*)_{jh}$ with vanishing initial value, whose solution is
\begin{equation}
D_t:=\mathrm{E}[(\xi_j-m_j(t))(\xi_h-m_h(t))|\xi(0)]=e^{tA}\int_{0}^t e^{-\tau A}BB^* e^{-\tau A^*}d\tau e^{tA^*}.
\label{eq:Dt4}
\end{equation}
By Kalman's condition \eqref{eq:kalman}, the matrix $D_t$ is invertible for every $t>0$. Therefore we can conclude that the $C^\infty$ fundamental solution of equation \eqref{eq:pdelin} is given by the non-degenerate Gaussian
$$p(t,x,y)=\frac{e^{\varphi(t,x,y)}}{(2\pi)^{n/2}\sqrt{\det D_t}},$$
where
$$\varphi(t,x,y)=-\frac{1}{2}\left(y-e^{tA}\left(x+\int_0^t e^{-sA}ds\alpha\right)\right)^*D_t^{-1}\left(y-e^{tA}\left(x+\int_0^t e^{-sA}ds\alpha\right)\right).$$
In the case when $\alpha=0$ the formula for $p(t,x,y)$ reduces to
$$p(t,x,y)=\frac{e^{-\frac{1}{2}(y-e^{tA}x)^*D_t^{-1}(y-e^{tA}x)}}{(2\pi)^{n/2}\sqrt{\det{D_t}}} \qquad\qquad \mbox{ for } t>0,  x,y\in\bbR^n.$$

\medskip

\subsection*{Acknowledgments} This research has been supported by the European Research Council, ERC StG 2009 ``GeCoMethods'', contract number 239748 and by the ANR project SRGI ``Sub-Riemannian Geometry and Interactions''. 

\bibliography{bibliografia}
\bibliographystyle{plain} 
\end{document}